\numberwithin{equation}{section}
\newtheorem{theorem}{Theorem}[section]
\theoremstyle{definition}
\newtheorem{remark}[theorem]{Remark}
\definecolor{light-gray}{gray}{0.69}
\definecolor{light-red}{rgb}{1.0,0.4,0.4}
\definecolor{light-blue}{rgb}{0.4,0.45,1}
\definecolor{light-green}{rgb}{0.5,0.8,0.0}
\definecolor{dark-green}{rgb}{0.0,0.4,0.0}
\definecolor{dark-red}{rgb}{1.0,0.3,0.3}
\definecolor{dark-gray}{gray}{0.59}
\definecolor{very-dark-gray}{gray}{0.39}
\definecolor{lighter-red}{rgb}{1.0,0.6,0.6}
\definecolor{ocker_hell}{rgb}{0.75,0.7,0.4}
\definecolor{gelb_dunkel}{rgb}{0.75,0.7,0.0}
\definecolor{gruen_hell}{rgb}{0.5,0.8,0.0}
\definecolor{dark-blue}{rgb}{0.0,0.0,0.5}
\definecolor{new-blue}{rgb}{0.0,0.0,0.8}
\definecolor{lila}{rgb}{0.5,0.0,0.5}
\definecolor{dark-red}{rgb}{0.5,0.0,0.0}
\begin{document}

\renewcommand{\thefootnote}{\fnsymbol{footnote}}\setcounter{footnote}{0}

\begin{center}
{\Large Construction of Locally Conservative Fluxes for High Order Continuous Galerkin Finite Element Methods }
\end{center}

\renewcommand{\thefootnote}{\fnsymbol{footnote}}
\renewcommand{\thefootnote}{\arabic{footnote}}

\begin{center}
Q.~Deng  and V.~Ginting\\
Department of Mathematics, University of Wyoming, Laramie, Wyoming 82071, USA\footnote{Complete mailing address: Department of Mathematics, 1000 E. University Ave. Dept. 3036, Laramie, WY 82071}
\end{center}


\renewcommand{\baselinestretch}{1.5}
\begin{abstract}


We propose a simple post-processing technique for linear and high order  continuous Galerkin Finite Element Methods (CGFEMs) to obtain locally conservative flux field. The post-processing technique requires solving an auxiliary problem on each element independently which results in solving a linear algebra system whose size is $\frac{1}{2}(k+1)(k+2)$ for $k^\text{th}$ order CGFEM. The post-processing could have been done directly from the finite element solution that results in locally conservative flux on the element. However, the normal flux is not continuous at the element's boundary. To construct locally conservative flux field whose normal component is also continuous, we propose to do the post-processing on the nodal-centered control volumes which are constructed from the original finite element mesh.  We show that the post-processed solution converges in an optimal fashion to the true solution in an $H^1$ semi-norm. We present various numerical examples to demonstrate the performance of the post-processing technique.

\end{abstract}

\paragraph*{Keywords}
CGFEM; FVEM; conservative flux; post-processing


\section{Introduction} \label{sec:intro}

\renewcommand{\baselinestretch}{1.5}

Both finite volume element method (FVEM) and continuous Galerkin finite element method (CGFEM) are widely used for solving partial differential equations and they have both advantages and disadvantages. Both methods share
a property in how the approximate solutions are represented through linear combinations of the finite element basis
functions. They have a main advantage of the ability to solve the partial differential equations posed in
complicated geometries. However the methods differ in the variational formulations governing the approximate
solutions. CGFEMs are defined as a global variational formulation while FVEM relies on local variational formulation, 
namely one that imposes local conservation of fluxes. In the case of linear finite element, it is well-known that the bilinear form of FVEM is closely related to its CGFEM counterpart, and this closeness is exploited to carry out the
error analysis of FVEM \cite{bank1987some, hackbusch1989first}. In 2002, Ewing \textit{et al.}  showed that the stiffness matrix derived from the linear FVEM is a small perturbation of that of the linear CGFEM for sufficiently small mesh size of the triangulation \cite{ewing2002accuracy}. In 2009, an identity between stiffness matrix of linear FVEM and the matrix of linear CGFEM was established; see Xu \textit{et al.} \cite{xu2009analysis}. A significant amount of work has been done to investigate the closeness of linear FVEM and linear CGFEM. However, the current understanding and implementation
of higher order FVEMs are still at its infancy and are not as satisfactory as linear FVEM. For one-dimensional elliptic equations, high order FVEMs have been developed in \cite{plexousakis2004construction}. Other relevant high order FVEM work can be found in \cite{liebau1996finite, li2000generalized, cai2003development, chen2012higher, chen2015construction}.

As mentioned, FVEM produces locally conservative fluxes while, due to the global formulation, CGFEMs do not. Robustness of the CGFEMs for any order has been established through extensive and rigorous error analysis, while this is not the case for FVEM. Development of linear algebra solvers for CGFEMs has reached an advanced stage, mainly driven from a solid understanding of the
variational formulations and their properties, such as coercivity (and symmetry) of
the bilinear form in the Galerkin formulation. On the other hand, the resulting linear algebra systems derived from FVEMs, especially high order FVEMs, are not that easy to solve. Typically, the matrices resulting from FVEMs are not symmetric even if the original boundary value problem is. Furthermore, at most FVEM discretization with linear finite element basis
yields M-matrix, while with quadratic finite element basis it is not (see \cite{liebau1996finite}).

Preservation of numerical local conservation property of approximate solutions are imperative in simulations of many 
physical problems especially those that are derived from law of conservation. In order to maintain the advantages of CGFEM as well as to obtain locally conservative fluxes, post-processing techniques are developed; see \cite{arbogast1995characteristics,  bush2015locally, bush2013application, chou2000conservative,  cockburn2007locally, cordes1992continuous, 
 deng2015construction, gmeiner2014local, hughes2000continuous, 
  kees2008locally,  larson2004conservative, loula1995higher,
nithiarasu2004simple, srivastava1992three,  sun2009locally,  thomas2008element, thomas2008locally, toledo1989mixed,   zhang2013locally}. The post-processing techniques proposed in the aforementioned references are mainly techniques for post-processing linear finite element related methods and they include finite element methods for solving pure elliptic equations, advection diffusion equations, advection dominated diffusion equations, elasticity problems,
Stokes problem, etc. Among them, some of the proposed post-processing techniques require solving global systems.  We will focus on a brief review on the post-processing techniques for high order CGFEMs. Generally, those post-processing techniques that works for high order CGFEMs also work for lower order CGFEMs, but may not vice versa.

There are very limited work on the post-processing for high order CGFEMs to obtain locally conservative fluxes.
An interesting work on post-processing for high order CGFEMs is in Zhang \textit{et al} \cite{zhang2012flux}. In their work, they showed that the elemental fluxes directly calculated from any order of CGFEM solutions converge to the true fluxes in an optimal order but the fluxes are not naturally locally conservative. They proposed two post-processing techniques to obtain the locally conservative fluxes at the boundaries of the each element. The post-processed solutions are of optimal both $L^2$ norm and $H^1$ semi-norm convergence orders. Very interestingly, one of the post-processed solution still satisfies the original finite element equations. Other work on post-processing to gather locally conservative
flux that include high order finite elements is recorded in \cite{cockburn2007locally}. The post-processing involves
two steps: solving a set of local systems followed by solving a global system.

A uniform approach to local reconstruction of the local fluxes from various finite element method (FEM) solutions was presented in \cite{becker2015robust}. These methods includes any order of conforming, nonconforming, and discontinuous FEMs. They proposed a hybrid formulation by  utilizing Lagrange multipliers, which can be computed locally. However, the reconstructed fluxes are not locally conservative. They used the reconstructed fluxes to derive a posteriori error estimator \cite{becker2015stopping}.

In this paper, we propose a post-processing technique for any order of CGFEMs to obtain fluxes that are locally conservative on a dual mesh consisting of control volumes.   The dual mesh is constructed from the original mesh in a different way for different order of CGFEMs. The technique requires solving an auxiliary problem which results in a low dimensional linear algebra system on each element independently. Thus, the technique can be implemented in a parallel environment and it produces locally conservative fluxes wherever it is needed. The technique is developed on triangular meshes and it can be naturally extended to rectangular meshes.

The rest of the paper is organized as follows. The CGFEM formulation of the model problem is presented in Section \ref{sec:cgfem} followed by the description of the methodology of the post-processing technique in Section \ref{sec:pp}. Analysis of the post-processing technique is presented in Section \ref{sec:ana} and numerical examples are presented to demonstrate the performance of the technique in Section \ref{sec:num}.

\section{Continuous Galerkin Finite Element Method} \label{sec:cgfem}
For simplicity, we consider the elliptic boundary value problem
\begin{equation}\label{pde}
\begin{cases}
\begin{aligned}
- \nabla \cdot ( \kappa \nabla u ) & =f \quad \text{in} \quad \Omega, \\
u&= g \quad \text{on} \quad \partial\Omega, \\
\end{aligned}
\end{cases}
\end{equation}
where $\Omega$ is a bounded open domain in $\mathbb{R}^2$ with Lipschitz boundary $\partial\Omega$,  $\kappa = \kappa(\boldsymbol{x})$ is the elliptic coefficient, $u=u(\boldsymbol{x})$ is the solution to be found, $f=f(\boldsymbol{x})$ is a forcing function. Assuming $0 < \kappa_{\min} \leq \kappa(\boldsymbol{x}) \leq \kappa_{\max} < \infty $ for all $\boldsymbol{x} \in\Omega$ and $f \in L^2(\Omega)$,  Lax-Milgram Theorem guarantees a unique weak solution to \eqref{pde}.  For the polygonal domain $\Omega$, we consider a partition $\mathcal{T}_h$ consisting of triangular elements $\tau$ such that $\overline\Omega = \bigcup_{\tau\in\mathcal{T}_h} \tau.$ We set $h=\max_{\tau\in\mathcal{T}_h} h_\tau$ where $h_\tau$ is defined as the diameter of $\tau$. The continuous Galerkin finite element space is defined as
$$
V^k_h = \big\{w_h\in C(\overline\Omega): w_h|_\tau \in P^k(\tau), \ \forall \ \tau\in\mathcal{T}_h \ \text{and} \ w_h|_{\partial\Omega} = 0 \big\}, 
$$ 
where $P^k(\tau)$ is a space of polynomials with degree at most $k$ on $\tau$.
The CGFEM formulation for \eqref{pde} is to find $u_h$ with $(u_h - g_h) \in V^k_h$, such that
\begin{equation} \label{eq:fem} 
a(u_h, w_h) = \ell(w_h) \quad \forall \ w_h \in V^k_h,  
\end{equation} 
where
\begin{equation*}
a(v, w) = \int_\Omega \kappa  \nabla v \cdot \nabla w \ \text{d} \boldsymbol{x}, \qquad \text{and} \qquad \ell(w) = \int_\Omega f  w \ \text{d} \boldsymbol{x}, 
\end{equation*}
and $g_h \in V_h^k$ can be thought of as the interpolant of $g$ using the usual finite element basis. 

\begin{figure}[ht]
\centering
\includegraphics[height=4.3cm]{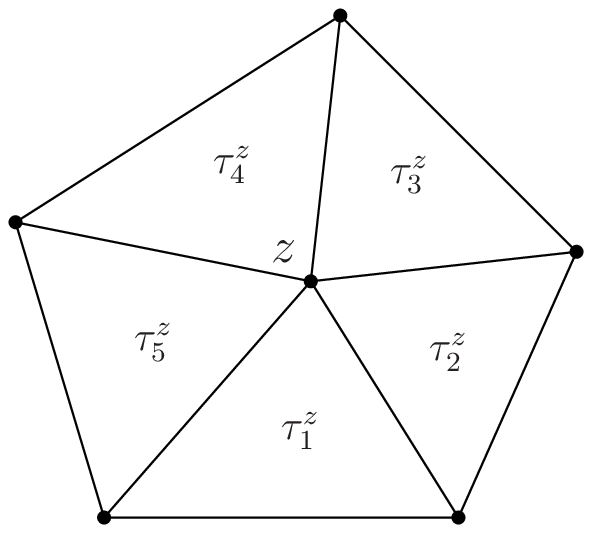} 
\caption{Setting for $V_h^1$: the dots represent the degrees of freedom and $\Omega^z = \cup_{i=1}^5 \tau_i^z$ is the support of $\phi_z$.} 
\label{fig:lelemsupp}
\includegraphics[height=4.3cm]{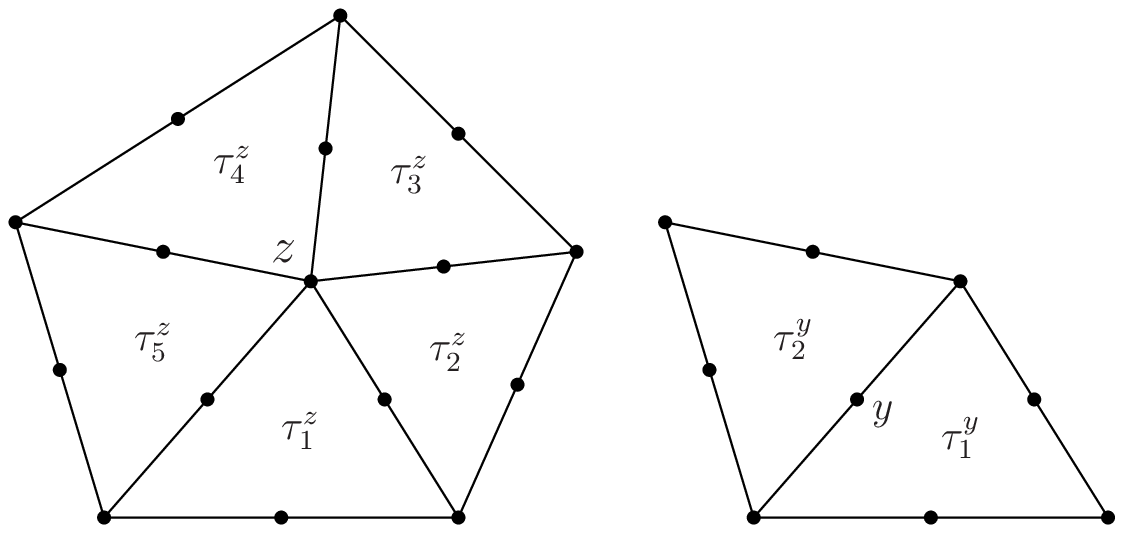}
\caption{Setting for $V_h^2$: The dots represent the degrees of freedom. $\Omega^z=  \cup_{i=1}^5 \tau_i^z$ (left) is support of $\phi_z$ and $\Omega^y =  \cup_{i=1}^2 \tau_i^y$ (right) is the support of $\phi_y$.}
\label{fig:qelemsupp}
\includegraphics[height=4.3cm]{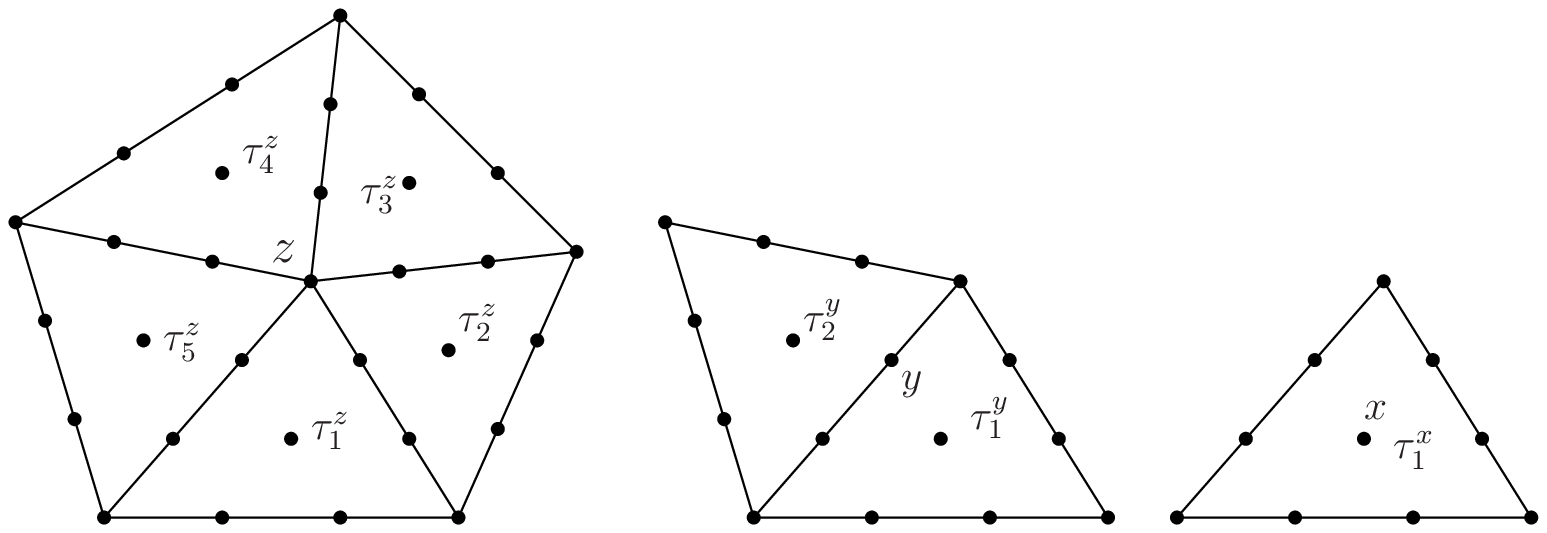}
\caption{Setting for $V_h^3$: The dots represent the degrees of freedom. $\Omega^z=  \cup_{i=1}^5 \tau_i^z$ (left) is the support of $\phi_z$, and $\Omega^y =  \cup_{i=1}^2 \tau_i^y$ (middle) is the support of $\phi_y$, and $\Omega^x = \tau_1^x$ is the support of $\phi_x$.}
\label{fig:celemsupp}
\end{figure}

We now present a fundamental but obvious fact  for this CGFEM formulation. To make it as general as possible, let $Z$ be the set of nodes in $\Omega$ resulting from the partition $\mathcal{T}_h$ and placing the degrees of freedom
owned by $V_h^k$. In particular, $Z$ consists of vertices for $V_h^1$ (see Figure \ref{fig:lelemsupp}), vertices and degrees of freedom on the edges for $V_h^2$ (see Figure \ref{fig:qelemsupp}), vertices and degrees of freedom on the edges and in the elements' barycenters for
$V_h^3$  (see Figure \ref{fig:celemsupp}). Furthermore, $Z = Z_{\text{in}} \cup Z_\text{d}$, where $Z_{\text{in}}$ is the set of interior degrees of freedom and $Z_\text{d}$ is the set of corresponding points on $\partial\Omega$. Denoting the usual Lagrange nodal basis of $V^k_h$ as $\{ \phi_\xi \}_{\xi \in Z_\text{in}}$, \eqref{eq:fem} yields 
\begin{equation} \label{eq:femphi}
a(u_h, \phi_\xi) = \ell (\phi_\xi) \quad \forall \ \xi \in Z_{\text{in}}. 
\end{equation}

For a $\xi \in Z_{\text{in}}$, let $\Omega^\xi$ be the support of the basis function $\phi_\xi$. Then in the partition $\mathcal{T}_h$, $\Omega^\xi = \cup_{i=1}^{N_\xi} \tau_i^\xi$, where
$\tau_i^\xi$ is an element that has $\xi$ as one of its degrees of freedom, and
 $N_\xi$ is the total number of such elements. In the linear case, $N_\xi$ is the number of elements sharing vertex $\xi$ (see Figure \ref{fig:lelemsupp}); in the quadratic case, $N_\xi$ is the number of elements sharing vertex $\xi$ or a middle point on an edge in which case $N_\xi = 2$ (see Figure \ref{fig:qelemsupp}); in the cubic case, $N_\xi$ can be those in quadratic case plus that it can be one if $\xi$ is inside the element (see Figure \ref{fig:celemsupp}). With this in mind,  \eqref{eq:femphi} is expressed as

\begin{equation} \label{eq:localfem}
\sum_{i=1}^{N_\xi} a_{\tau^\xi_i}(u_h, \phi_\xi) = \sum_{i=1}^{N_\xi} \ell_{\tau^\xi_i} (\phi_\xi),
\end{equation}
where $a_\tau(v_h, w_h)$ is $a(v_h, w_h)$ restricted to element $\tau$ and $\ell_\tau (w_h)$ is $\ell(w_h)$ restricted to element $\tau$.
Equation \eqref{eq:localfem} is fundamental and we will use this fact to derive the post-processing technique in Section \ref{sec:pp}.

\section{A Post-processing Technique}  \label{sec:pp}

A naive derivative calculation of $u_h$ does not yield locally conservative fluxes. For this reason, in this section we propose a post-processing technique to construct locally conservative fluxes over control volumes from CGFEM solutions. We will focus on the construction for $k^{\text{th}}$ order CGFEM, where $k=1, 2, 3$, i.e., linear, quadratic, and cubic CGFEMs. Construction for orders higher than these CGFEMs can be conducted in a similar fashion.

\subsection{Auxiliary Elemental Problem} \label{sec:bvp}

Based upon the original finite element mesh and $V_h^k$, a dual mesh that consists of control volumes is generated over which the post-processed fluxes is to satisfy the local conservation.  For $V_h^1$, we connect the barycenter and middle points of edges of a triangular element; see Figure \ref{fig:lelemcv}.  For $V_h^2$, we firstly discretize the triangular element into \textcolor{red}{four} sub-triangles and then connect the barycenters and middle points of each sub-triangle; see plots in Figure \ref{fig:qelemcv}, and similarly $V_h^3$, see plots in Figure \ref{fig:celemcv}. We can also see the construction of the dual mesh on a single element in Figure \ref{fig:elem}. Each control volume corresponds to a degree of freedom in CGFEMs. We post-process the CGFEM solution $u_h$ to obtain 
$\widetilde {\boldsymbol{\nu}}_h = -\kappa   \nabla \widetilde{u}_h $ such that it is continuous at the boundaries of each control volume and satisfies the local conservation property in the sense
\begin{equation} \label{eq:cvconservation}
\int_{\partial C^\xi} \widetilde {\boldsymbol{\nu}}_h \cdot \boldsymbol{n}  \ \text{d} l = \int_{C^\xi} f \ \text{d} \boldsymbol{x},
\end{equation}
where $C^\xi$ can be a control volume surrounding a vertex as $C^z$ in Figure \ref{fig:lelemcv}, \ref{fig:qelemcv}, and \ref{fig:celemcv}, or a control volume surrounding a degree of freedom on an edge as $C^y$ in Figure \ref{fig:qelemcv} and \ref{fig:celemcv}, or $C^x$ in Figure \ref{fig:celemcv}.

\begin{figure}[ht]
\centering
\includegraphics[height=4.5cm]{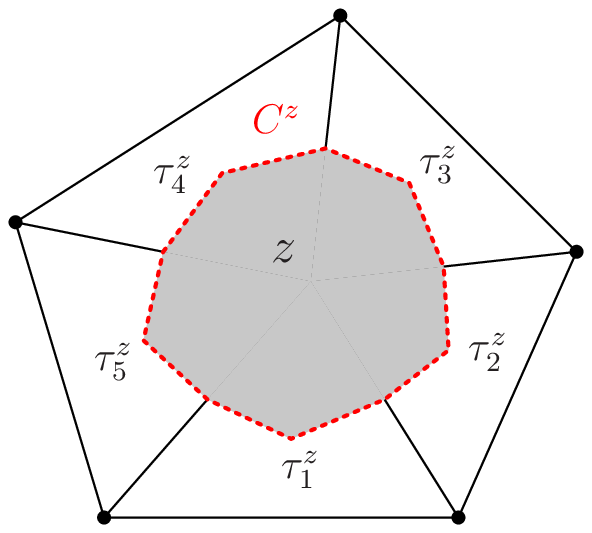}  
\caption{ $C^z$ is the control volume corresponding to $\phi_z$ in $V_h^1$. }
\label{fig:lelemcv}
\includegraphics[height=4.5cm]{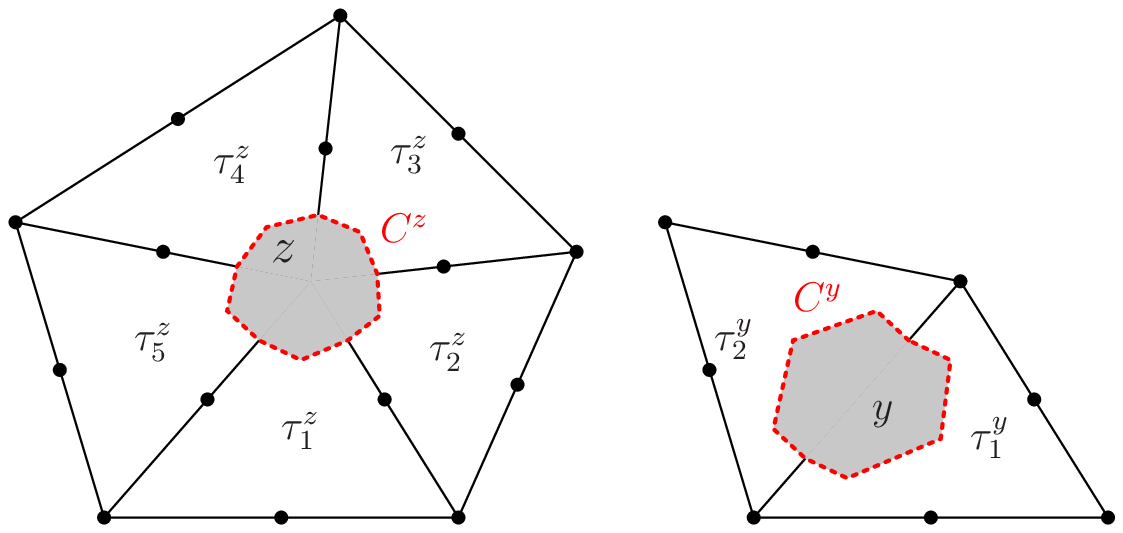}  
\caption{ $C^z, C^y$ are control volumes corresponding to $\phi_z$ (left) and $\phi_y$ (right), respectively, in $V_h^2$. }
\label{fig:qelemcv}
\includegraphics[height=4.5cm]{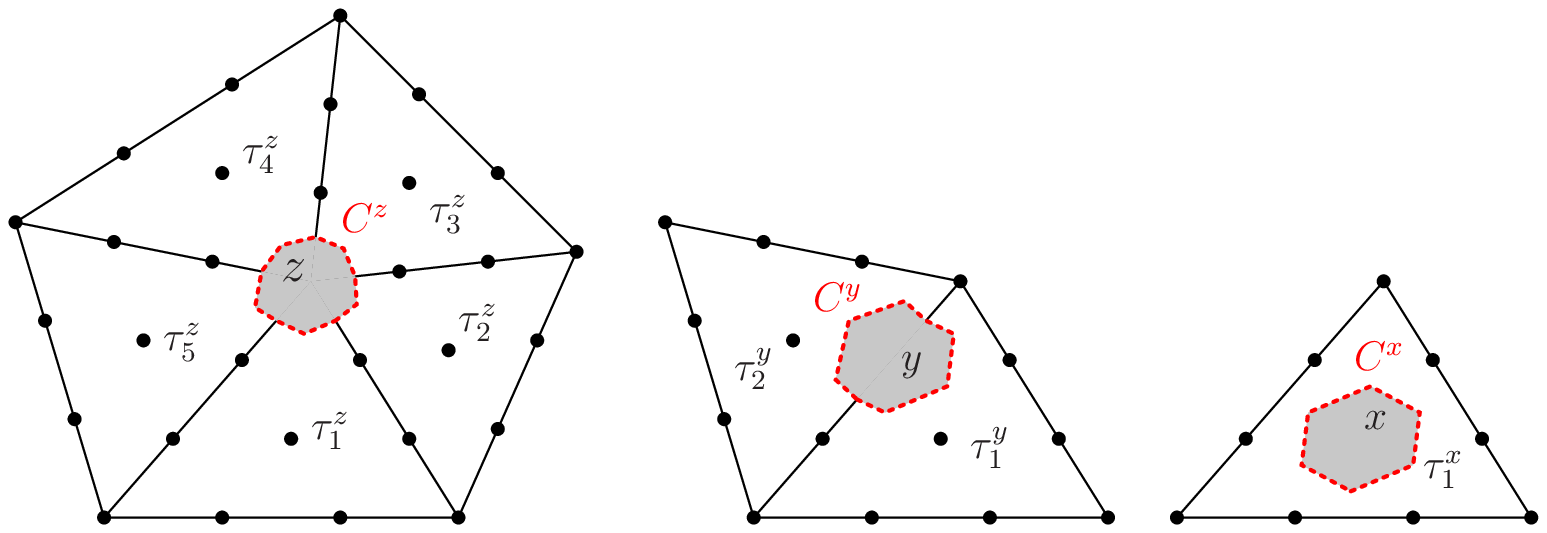}
\caption{ $C^z, C^y, C^x$ are control volumes corresponding to $\phi_z$ (left), $\phi_y$ (middle), and
$\phi_x$ (right), respectively,  in $V_h^3$. }
\label{fig:celemcv}
\end{figure}

In order to obtain the locally conservative fluxes on each control volume, we set and solve an elemental/local problem on $\tau$. Let $N_k = \frac{1}{2}(k+1)(k+2)$ be the total number of degrees of freedom on a triangular element for $V_h^k$. We denote the collection of those degrees of freedom by $s(\tau, k) = \{ z_j \}_{j=1}^{N_k}$; see Figure \ref{fig:elem}.  We partition each element $\tau$ into $N_k$ non-overlapping polygonals $\{ t_{z_j} \}_{j=1}^{N_k}$; see Figure \ref{fig:elem}.
For $t_\xi$ with $\xi \in s(\tau,k)$, we make decomposition $\partial t_\xi = ( \partial \tau \cap \partial t_\xi ) \cup ( \partial C^\xi \cap \partial t_\xi ).$  We also define the average on an edge or part of the edge which is the intersection of two elements $\tau_1$ and $\tau_2$ for vector $\boldsymbol{v}$ as
\begin{equation} \label{def:ave}
\{ \boldsymbol{v} \} = \frac{\boldsymbol{v}|_{\tau_1} + \boldsymbol{v}|_{\tau_2} }{2}. 
\end{equation}

\begin{figure}[ht]
\centering
\includegraphics[height=4.0cm]{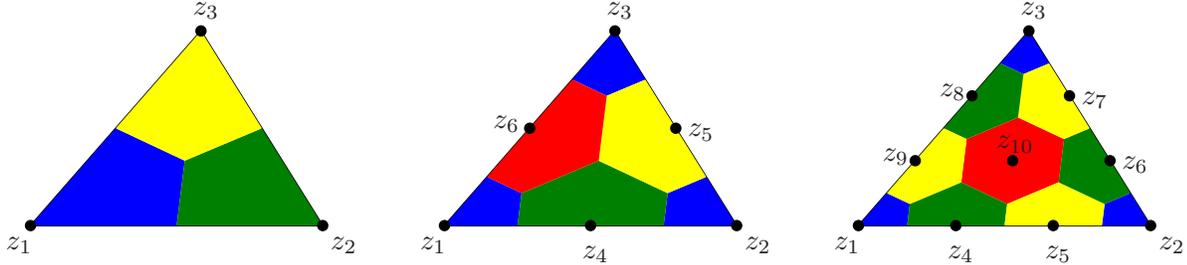}
\caption{ Control volume construction and degrees of freedom on an element for $V_h^k$: $k=1$ (left), $k=2$ (middle), and $k=3$ (right). }
\label{fig:elem}
\end{figure}

Let $V^0(\tau)$ be the space of piecewise constant functions on element $\tau$ such that $V^0(\tau) = \text{span} \{ \psi_\eta \}_{\eta \in s(\tau,k)}$, where $\psi_\eta$ is the characteristic function of the polygonal $t_\eta$, i.e.,
\begin{equation}
\psi_\eta(\boldsymbol{x}) = \Big\lbrace\begin{array}{c} 1 \quad \text{if} \quad \boldsymbol{x} \in t_\eta  \\ 0 \quad \text{if} \quad \boldsymbol{x} \notin t_\eta \end{array}.
\end{equation}
We define a map $I_\tau: H^1(\tau) \rightarrow V^0(\tau)$ with $I_\tau w = \displaystyle \sum_{\xi \in s(\tau,k)} w_\xi \psi_\xi$, where $w_\xi = w(\xi)$ for $w\in H^1(\tau)$. We define the following bilinear forms
\begin{equation} \label{eq:bebiforms}
b_\tau( v, w)  = -\sum_{\xi \in s(\tau,k)} \int_{\partial C^\xi \cap \partial t_\xi} \kappa  \nabla v  \cdot \boldsymbol{n} I_\tau w \ \text{d} l, \qquad e_\tau(v, w) = \int_{\partial \tau} \{ \kappa  \nabla v \} \cdot \boldsymbol{n}  w  \ \text{d} l.
\end{equation}

Let $ V^k_h(\tau) = \text{span}\{  \phi_\eta \}_{ \eta \in s(\tau,k) }$ where $ \phi_\eta$ can be thought as the usual
nodal $\eta$ basis function restricted to element $\tau$.
The elemental calculation for the post-processing is to find $\widetilde u_{\tau, h} \in V^k_h(\tau) $ satisfying 
\begin{equation} \label{eq:bvpvf}
b_\tau(\widetilde u_{\tau, h}, w) = \ell_\tau ( I_\tau w - w ) + a_\tau(u_h, w) + e_\tau(u_h, I_\tau w - w), \quad \forall \ w \in V^k_h(\tau).
\end{equation}

\newtheorem{lem}{Lemma}[section]
\begin{lem}  \label{lem:ebvp}
The variational formulation \eqref{eq:bvpvf} has a unique solution up to a constant.
\end{lem}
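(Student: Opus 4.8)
\emph{Proof plan.} The problem \eqref{eq:bvpvf} is a square linear system: the trial space and the test space are both $V^k_h(\tau)$, of dimension $N_k$. Hence it suffices to understand the homogeneous problem together with one compatibility condition on the data. First I would note the trivial inclusion: if $v\in V^k_h(\tau)$ is constant on $\tau$ then $\nabla v=0$, so by the definition of $b_\tau$ in \eqref{eq:bebiforms} one has $b_\tau(v,w)=0$ for every $w$; thus the constants lie in the null space of $b_\tau(\cdot,w)$, and once a single solution is exhibited the whole solution set will be that solution plus this null space. It therefore remains to prove that (i) this null space is \emph{exactly} the one-dimensional space of constants and (ii) the right-hand side of \eqref{eq:bvpvf} is orthogonal to the adjoint null space.

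Step (i) is the heart of the matter. Testing $b_\tau(v,w)=0$ with the nodal basis and using $I_\tau\phi_\xi=\psi_\xi$, membership in the null space is equivalent to the $N_k$ conditions $\int_{\partial C^\xi\cap\partial t_\xi}\kappa\nabla v\cdot\boldsymbol{n}\,\text{d}l=0$ for all $\xi\in s(\tau,k)$, and one has to show these force $\nabla v=0$, i.e. that the associated $N_k\times N_k$ element matrix has rank $N_k-1$. Taking instead $w=v$ and applying the divergence theorem on each cell $t_\xi$ (using the splitting $\partial t_\xi=(\partial\tau\cap\partial t_\xi)\cup(\partial C^\xi\cap\partial t_\xi)$ and summing), I obtain the identity
\[
b_\tau(v,v)=a_\tau(v,v)-\int_\tau (I_\tau v-v)\,\nabla\cdot(\kappa\nabla v)\,\text{d}\boldsymbol{x}+\int_{\partial\tau}(I_\tau v-v)\,\kappa\nabla v\cdot\boldsymbol{n}\,\text{d}l,
\]
in which $a_\tau(v,v)=\|\kappa^{1/2}\nabla v\|_{L^2(\tau)}^2$ vanishes only when $v$ is constant. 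For $k=1$ with $\kappa$ constant on $\tau$ the two perturbation integrals vanish — the volume one because $\Delta v=0$, the boundary one because on each edge $I_\tau v$ integrates to the same value as the linear $v$ while $\kappa\nabla v\cdot\boldsymbol{n}$ is constant there — so $b_\tau(v,v)=a_\tau(v,v)$ and the claim is immediate. For $k=2,3$ the perturbation does not vanish, and I would verify by a finite, element-level computation, using the explicit barycenter/midpoint control-volume geometry of Figures \ref{fig:lelemcv}--\ref{fig:celemcv}, that it cannot destroy definiteness modulo constants (equivalently, that the $N_k-1$ independent flux functionals above still determine $\nabla v$); likewise for variable $\kappa$ one tracks the perturbation through this estimate. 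This rank/definiteness verification is the step I expect to be the main obstacle.

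For step (ii) I would observe that for any $v$ the sum $\sum_{\xi\in s(\tau,k)}\int_{\partial C^\xi\cap\partial t_\xi}\kappa\nabla v\cdot\boldsymbol{n}\,\text{d}l$ vanishes: every interior interface between two sub-cells is traversed twice with opposite normals and $\kappa\nabla v$ is single-valued on $\tau$ (equivalently, this is the divergence theorem on $\tau$). Since $\sum_{\xi}\phi_\xi\equiv1$ and $I_\tau 1=\sum_\xi\psi_\xi=1$, this says precisely $b_\tau(v,1)=0$ for all $v$, so the constant function lies in the adjoint null space; the system being square with a one-dimensional null space by step (i), the adjoint null space is also exactly the constants. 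Therefore \eqref{eq:bvpvf} is solvable iff its right-hand side vanishes at $w=1$, which it does: $I_\tau 1-1=0$ kills the $\ell_\tau$ and $e_\tau$ terms while $a_\tau(u_h,1)=0$ since $\nabla 1=0$. Combining (i) and (ii), \eqref{eq:bvpvf} has a solution, unique up to an additive constant, which is the assertion.
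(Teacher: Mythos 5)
Your overall framing is sound: the problem is a square $N_k\times N_k$ system, so it suffices to (i) show the kernel of $b_\tau(\cdot,\cdot)$ on $V^k_h(\tau)$ is exactly the constants and (ii) check the data is orthogonal to the adjoint kernel. Your step (ii) is complete and correct, and it is in substance exactly what the paper's proof does: testing with $w\equiv 1=\sum_{\xi\in s(\tau,k)}\phi_\xi$, using $I_\tau 1=1$ to kill the $\ell_\tau$ and $e_\tau$ terms and $\nabla 1=\boldsymbol{0}$ to kill $a_\tau(u_h,1)$, i.e.\ the Neumann-type compatibility condition. The genuine gap is step (i), which you yourself identify as ``the heart of the matter'' and then do not prove: you establish kernel $=$ constants only for $k=1$ with $\kappa$ constant on $\tau$, and for $k=2,3$ (and for variable $\kappa$) you defer to ``a finite, element-level computation'' that is never carried out. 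That nondegeneracy is precisely the content of ``unique up to a constant,'' and in your own argument it is also what licenses the rank--nullity step identifying the adjoint null space with the span of the all-ones functional; without it, neither uniqueness nor existence is established for the quadratic and cubic cases. Moreover it is not a routine check: positive definiteness of the FVEM-type form $b_\tau$ modulo constants for higher-order dual meshes is known to be delicate and geometry-dependent, so leaving it as a declared obstacle leaves the lemma unproven exactly where it is nontrivial.

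For comparison, the paper's proof takes a shortcut in the opposite direction: it rewrites \eqref{eq:bvpvf} as the discrete Neumann problem \eqref{eq:cz10} with boundary data \eqref{eq:bvpb} and verifies only the compatibility condition (your step (ii)), tacitly relying on the Neumann structure for the ``unique up to a constant'' conclusion; the missing nondegeneracy is supplied (implicitly) by the local coercivity $b_\tau(v,v)\geq C_b\,|v|^2_{H^1(\tau)}$ recorded later in Lemma \ref{lem:loccoe}, which is quoted from Theorems 2 and 5 of \cite{xu2009analysis} for $k=1,2$. So the efficient way to close your step (i), at least for $k=1,2$, is to invoke that coercivity rather than attempt a bare-hands element computation: coercivity immediately gives that $b_\tau(v,v)=0$ forces $|v|_{H^1(\tau)}=0$, i.e.\ $v$ constant. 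Your energy identity $b_\tau(v,v)=a_\tau(v,v)-\int_\tau (I_\tau v-v)\,\nabla\cdot(\kappa\nabla v)\,\text{d}\boldsymbol{x}+\int_{\partial\tau}(I_\tau v-v)\,\kappa\nabla v\cdot\boldsymbol{n}\,\text{d}l$ is correct and is the standard FVEM--FEM perturbation identity underlying those coercivity proofs, but controlling the perturbation uniformly is where the cited work does the real labor. Note finally that for $k=3$ neither your sketch nor the paper's citation provides the coercivity, so in the cubic case the uniqueness claim rests on an unproven nondegeneracy assumption in the paper as well; your write-up should either cite a result covering that case or state it as a hypothesis.
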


\begin{proof}
We have $V_h^k(\tau) = \text{span}\{  \phi_\xi \}_{ \xi \in s(\tau,k) }$, where $\phi_\xi(\eta) = \delta_{\xi \eta}$ with $\delta_{\xi \eta}$ being the Kronecker delta,
for all $\xi, \eta \in s(\tau,k)$. By replacing the test function $w$ with  $\phi_\xi$ for all $\xi \in  s(\tau,k)$,
 \eqref{eq:bvpvf}  is reduced to 
\begin{equation}  \label{eq:cz10}
- \int_{\partial C^\xi \cap \partial t_\xi } \kappa  \nabla \widetilde u_{\tau, h} \cdot \boldsymbol{n}  \ \text{d} l = \int_{t_\xi} f \ \text{d} \boldsymbol{x} - \ell_\tau (  \phi_\xi ) + a_\tau(u_h,  \phi_\xi ) + e_\tau( u_h, I_\tau  \phi_\xi -  \phi_\xi ), \quad \forall \ \xi \in  s(\tau,k).
\end{equation} 
This is a fully Neumann boundary value problem in $\tau$ with boundary condition satisfying 
\begin{equation} \label{eq:bvpb} 
- \int_{\partial \tau \cap \partial t_\xi } \kappa  \nabla \widetilde u_{\tau, h}  \cdot \boldsymbol{n}  \ \text{d} l = \ell_\tau (  \phi_\xi ) - a_\tau(u_h,  \phi_\xi ) - e_\tau( u_h, I_\tau  \phi_\xi -  \phi_\xi ), \quad \forall \ \xi \in s(\tau,k).
\end{equation} 
To establish the existence and uniqueness of the solution, one needs to verify the compatibility condition \cite{evans2010partial}. We calculate 
\begin{equation*}
-\int_{\partial \tau} \kappa  \nabla \widetilde u_{\tau, h} \cdot \boldsymbol{n} \ \text{d} l = \sum_{\xi \in s(\tau,k)} \Big( \ell_\tau (  \phi_\xi ) - a_\tau(u_h,  \phi_\xi ) - e_\tau( u_h, I_\tau  \phi_\xi -  \phi_\xi ) \Big).
\end{equation*}
Using the fact that $\sum_{\xi \in s(\tau,k)}   \phi_\xi = 1$ and linearity, we obtain
\begin{equation*}
\sum_{\xi\in s(\tau,k)} \ell_\tau(  \phi_\xi) = \sum_{\xi\in s(\tau,k)} \int_\tau f    \phi_\xi \ \text{d} \boldsymbol{x} = \int_\tau f \sum_{\xi\in s(\tau,k)}   \phi_\xi \ \text{d} \boldsymbol{x} = \int_\tau f \ \text{d} \boldsymbol{x}.
\end{equation*}
Using the fact that  $\nabla \big( \sum_{\xi\in s(\tau,k)}   \phi_\xi \big) = \boldsymbol{0} $ and linearity, we obtain
\begin{equation*}
\sum_{\xi\in s(\tau,k)} a_\tau(u_h,   \phi_\xi) = \sum_{\xi\in s(\tau,k)} \int_\tau \kappa  \nabla u_h \cdot \nabla   \phi_\xi \ \text{d} \boldsymbol{x} = \int_\tau \kappa  \nabla u_h \cdot \nabla \big( \sum_{\xi\in s(\tau,k)}   \phi_\xi \big) \text{d} \boldsymbol{x} = 0.
\end{equation*}
Also, we notice that 
\begin{equation*}
\sum_{\xi\in s(\tau,k)} e_\tau(u_h,   I_\tau  \phi_\xi -  \phi_\xi ) = \sum_{\xi\in s(\tau,k)}  \int_{\partial \tau \cap \partial t_\xi } \{ \kappa  \nabla u_h \} \cdot \boldsymbol{n}  \ \text{d} l - \int_{\partial \tau} \{ \kappa  \nabla u_h \} \cdot \boldsymbol{n} \sum_{\xi\in s(\tau,k)}   \phi_\xi \ \text{d} l = 0.
\end{equation*}
Combining these equalities, compatibility condition $\int_{\partial \tau} - \kappa  \nabla \widetilde u_{\tau, h} \cdot \boldsymbol{n} \ \text{d} l = \int_\tau f \ \text{d} \boldsymbol{x}$ is verified. 
This completes the proof.
\end{proof}

\begin{remark}
The technique proposed here can be naturally generalized to rectangular elements. In the proof of Lemma \ref{lem:ebvp}, for $\xi = z_{10}$ in cubic CGFEM, \eqref{eq:cz10} is naturally reduced to a local conservation equation $- \int_{\partial C^\xi } \kappa  \nabla \widetilde u_{\tau, h} \cdot \boldsymbol{n}  \ \text{d} l = \int_{C^\xi} f \ \text{d} \boldsymbol{x}$. This can be proved by using \eqref{eq:localfem}, $\partial \tau \cap \partial t_\xi = \emptyset$, and $\phi_{z_{10}} = 0$ on $\partial \tau.$
\end{remark}

\begin{lem}  \label{lem:elemlc}
The piecewise boundary fluxes defined in \eqref{eq:bvpb} satisfy local conservation on each element, i.e.,
\begin{equation} \label{eq:elemlc}
- \int_{\partial\tau } \kappa  \nabla \widetilde u_{\tau, h} \cdot \boldsymbol{n}  \ \text{d} l = \int_\tau f \ \text{d} \boldsymbol{x},
\end{equation}
and
\begin{equation} \label{eq:fluxed}
- \int_{\partial\tau } \kappa  \nabla \widetilde u_{\tau, h} \cdot \boldsymbol{n}  \ \text{d} l = - \int_{\partial\tau } \kappa  \nabla u \cdot \boldsymbol{n}  \ \text{d} l.
\end{equation}
\end{lem}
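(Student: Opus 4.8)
The plan is to get \eqref{eq:elemlc} directly from the construction of $\widetilde u_{\tau,h}$, and then to deduce \eqref{eq:fluxed} from the PDE \eqref{pde} itself. For the first identity, I would note that the polygons $\{t_\xi\}_{\xi\in s(\tau,k)}$ partition $\tau$, so the arcs $\{\partial\tau\cap\partial t_\xi\}_{\xi\in s(\tau,k)}$ cover $\partial\tau$ with pairwise disjoint interiors; summing the piecewise boundary-flux relations \eqref{eq:bvpb} over all $\xi\in s(\tau,k)$ therefore gives
\[
-\int_{\partial\tau}\kappa\,\nabla\widetilde u_{\tau,h}\cdot\boldsymbol{n}\,\text{d}l=\sum_{\xi\in s(\tau,k)}\Big(\ell_\tau(\phi_\xi)-a_\tau(u_h,\phi_\xi)-e_\tau(u_h,I_\tau\phi_\xi-\phi_\xi)\Big).
\]
This is precisely the sum already evaluated at the end of the proof of Lemma~\ref{lem:ebvp}: the identity $\sum_\xi\phi_\xi\equiv 1$ on $\tau$ gives $\sum_\xi\ell_\tau(\phi_\xi)=\int_\tau f\,\text{d}\boldsymbol{x}$, the identity $\nabla\big(\sum_\xi\phi_\xi\big)=\boldsymbol{0}$ gives $\sum_\xi a_\tau(u_h,\phi_\xi)=0$, and the two contributions making up $\sum_\xi e_\tau(u_h,I_\tau\phi_\xi-\phi_\xi)$ telescope to $0$. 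Hence \eqref{eq:elemlc} is just a restatement of the compatibility condition verified there, and I would simply invoke it.

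For \eqref{eq:fluxed}, I would return to the continuous problem. Since $f\in L^2(\Omega)$ and $-\nabla\cdot(\kappa\nabla u)=f$, the flux $\kappa\nabla u$ lies in $H(\text{div},\tau)$ on each element $\tau$, so its normal component $\kappa\nabla u\cdot\boldsymbol{n}$ has a well-defined trace in $H^{-1/2}(\partial\tau)$ and the divergence theorem holds in the generalized sense. Pairing $-\nabla\cdot(\kappa\nabla u)=f$ with the constant function $1$ on $\tau$ and integrating by parts yields
\[
-\int_{\partial\tau}\kappa\,\nabla u\cdot\boldsymbol{n}\,\text{d}l=\int_\tau-\nabla\cdot(\kappa\nabla u)\,\text{d}\boldsymbol{x}=\int_\tau f\,\text{d}\boldsymbol{x}.
\]
Comparing this with \eqref{eq:elemlc} gives \eqref{eq:fluxed} immediately.

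I do not anticipate any real obstacle: \eqref{eq:elemlc} is a bookkeeping consequence of \eqref{eq:bvpb} together with the partition-of-unity identities for $\{\phi_\xi\}$, and \eqref{eq:fluxed} is the divergence theorem applied to the exact solution. The only points deserving a word of care are (i) the geometric fact that the pieces $\partial\tau\cap\partial t_\xi$ tile $\partial\tau$, so that summing the elemental relations reconstitutes the full boundary integral, and (ii) the regularity justification for writing $\int_{\partial\tau}\kappa\nabla u\cdot\boldsymbol{n}\,\text{d}l$ as a genuine duality pairing, which is supplied by $\kappa\nabla u\in H(\text{div},\tau)$.
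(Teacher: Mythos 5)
Your proposal is correct and follows essentially the same route as the paper: identity \eqref{eq:elemlc} is exactly the compatibility condition verified in the proof of Lemma~\ref{lem:ebvp} (obtained by summing \eqref{eq:bvpb} over $\xi\in s(\tau,k)$ and using the partition-of-unity identities), and \eqref{eq:fluxed} follows from the PDE \eqref{pde} and the divergence theorem applied on $\tau$. Your added remark on $\kappa\nabla u\in H(\mathrm{div},\tau)$ justifying the boundary integral is a fine point of rigor the paper leaves implicit, but it does not change the argument.
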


\begin{proof}
Equation \eqref{eq:elemlc} is established in the proof of Lemma \ref{lem:ebvp}. 
Identity \eqref{eq:fluxed} is verified by using \eqref{pde} and Divergence theorem:
\begin{equation*}
\begin{aligned}
\int_{\partial \tau} - \kappa  \nabla \widetilde u_{\tau, h} \cdot \boldsymbol{n} \ \text{d} l = \int_\tau f \ \text{d} \boldsymbol{x}  = \int_\tau \nabla \cdot ( -\kappa  \nabla u)   \ \text{d} \boldsymbol{x} = \int_{\partial\tau } -\kappa  \nabla u \cdot \boldsymbol{n}  \ \text{d} l.
\end{aligned} 
\end{equation*}

\end{proof}

\begin{remark}
Lemma \ref{lem:elemlc} implies that the proposed way of imposing elemental boundary condition in \eqref{eq:bvpb} is a rather simple post-processing technique: we set flux at $\partial \tau \cap \partial t_\xi $ as $\ell_\tau (  \phi_\xi ) - a_\tau(u_h,  \phi_\xi ) - e_\tau( u_h, I_\tau  \phi_\xi -  \phi_\xi )$. It does not require solving any linear system but provides locally conservative fluxes at the element boundaries. 
In two-dimensional case, $\partial \tau \cap \partial t_\xi $ consists of two segments in the setting as shown in Figure \ref{fig:elem}. If we want to provide a flux approximation on each segment, we can for example set the flux for each segment $\Gamma_\xi$ as 
\begin{equation}
\frac{\ell_\tau (  \phi_\xi ) - a_\tau(u_h,  \phi_\xi ) + e_\tau( u_h, \phi_\xi )}{2} - \int_{\Gamma_\xi} \{ \kappa  \nabla u_h \} \cdot \boldsymbol{n}  \ \text{d} l.
\end{equation}
If $\partial \tau \cap \partial t_\xi $ consists of more segments, we can set the flux in the similar way. For instance, we assign weights to $\ell_\tau ( \phi_\xi ) - a_\tau(u_h,  \phi_\xi ) + e_\tau( u_h, \phi_\xi )$ according to the length ratio of the segment over $\partial \tau \cap \partial t_\xi $. A drawback of this technique, however, is that the post-processed fluxes in general is not continuous at the element boundaries, except only when $\ell_\tau (  \phi_\xi ) - a_\tau(u_h,  \phi_\xi )$ from two neighboring elements are equal. This reveals the merits of the main post-processing technique proposed in this paper. The main post-processing technique provides a way to obtain locally conservative fluxes on control volumes and the fluxes are continuous at the boundaries of each control volume.

\end{remark}

\begin{lem}  \label{lem:ppsol}
The true solution $u$ of \eqref{pde} satisfies 
\begin{equation} \label{eq:bvpvft}
b_\tau(u, w) = \ell_\tau ( I_\tau w - w ) + a_\tau(u, w) + e_\tau(u, I_\tau w - w), \quad \forall \ w \in H^1(\tau),
\end{equation}
and this further implies
\begin{equation} \label{eq:bvpvfe}
b_\tau(u - \widetilde u_{\tau, h}, w) = a_\tau(u - u_h, w) + e_\tau(u - u_h, I_\tau w - w), \quad \forall \ w \in V^k_h(\tau).
\end{equation}
\end{lem}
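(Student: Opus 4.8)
The plan is to establish \eqref{eq:bvpvft} directly from the strong form \eqref{pde} by integration by parts carried out polygon-by-polygon inside a fixed element $\tau$, and then to obtain \eqref{eq:bvpvfe} by a one-line subtraction of \eqref{eq:bvpvf} from \eqref{eq:bvpvft}. Throughout I would take $w$ regular enough that the nodal values $w_\xi = w(\xi)$ and the boundary traces below make sense; this is automatic for $w \in V_h^k(\tau)$, and $u$ is assumed smooth enough that $\kappa\nabla u$ has well-defined normal traces on $\partial\tau$ and on every $\partial t_\xi$.

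For the first identity, fix $\xi \in s(\tau,k)$, multiply $-\nabla\cdot(\kappa\nabla u) = f$ by the \emph{constant} $w_\xi$, integrate over the polygon $t_\xi$, and apply the divergence theorem to get $w_\xi\int_{t_\xi} f\,\mathrm{d}\boldsymbol{x} = -w_\xi\int_{\partial t_\xi}\kappa\nabla u\cdot\boldsymbol{n}\,\mathrm{d}l$. Using the decomposition $\partial t_\xi = (\partial\tau\cap\partial t_\xi)\cup(\partial C^\xi\cap\partial t_\xi)$ from Section \ref{sec:bvp} and summing over $\xi \in s(\tau,k)$, I would then recognize the three resulting pieces: $\sum_\xi w_\xi\int_{t_\xi} f = \int_\tau f\, I_\tau w = \ell_\tau(I_\tau w)$ since $I_\tau w \equiv w_\xi$ on $t_\xi$; $-\sum_\xi w_\xi\int_{\partial C^\xi\cap\partial t_\xi}\kappa\nabla u\cdot\boldsymbol{n} = b_\tau(u,w)$ by the definition of $b_\tau$ in \eqref{eq:bebiforms}; and $-\sum_\xi w_\xi\int_{\partial\tau\cap\partial t_\xi}\kappa\nabla u\cdot\boldsymbol{n} = -\int_{\partial\tau}\kappa\nabla u\cdot\boldsymbol{n}\,I_\tau w = -e_\tau(u, I_\tau w)$, where the last step uses that for the exact solution the normal flux is single-valued across interelement boundaries, so $\{\kappa\nabla u\}\cdot\boldsymbol{n} = \kappa\nabla u\cdot\boldsymbol{n}$ on $\partial\tau$. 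This produces the intermediate identity $b_\tau(u,w) = \ell_\tau(I_\tau w) + e_\tau(u, I_\tau w)$.

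To finish \eqref{eq:bvpvft} I would integrate by parts once more, now over all of $\tau$: $a_\tau(u,w) = \int_\tau\kappa\nabla u\cdot\nabla w = \int_\tau f w + \int_{\partial\tau}\kappa\nabla u\cdot\boldsymbol{n}\, w = \ell_\tau(w) + e_\tau(u,w)$, again invoking single-valuedness of the exact flux on $\partial\tau$. Hence the combination $-\ell_\tau(w) + a_\tau(u,w) - e_\tau(u,w)$ vanishes, and adding it to the intermediate identity yields exactly $b_\tau(u,w) = \ell_\tau(I_\tau w - w) + a_\tau(u,w) + e_\tau(u, I_\tau w - w)$ for all admissible $w$, in particular for all $w \in V_h^k(\tau)$. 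Then \eqref{eq:bvpvfe} follows immediately: restrict \eqref{eq:bvpvft} to $w \in V_h^k(\tau)$, subtract \eqref{eq:bvpvf}, observe the $\ell_\tau(I_\tau w - w)$ terms cancel, and use linearity of $b_\tau$, $a_\tau$, $e_\tau$ in their first arguments to collect $b_\tau(u - \widetilde u_{\tau,h}, w) = a_\tau(u - u_h, w) + e_\tau(u - u_h, I_\tau w - w)$.

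I expect the only genuine obstacle to be the bookkeeping in the first step: correctly pairing the two portions $\partial\tau\cap\partial t_\xi$ and $\partial C^\xi\cap\partial t_\xi$ of each $\partial t_\xi$ with the terms defining $e_\tau$ and $b_\tau$, keeping the signs straight, and justifying that for the exact solution the averaged normal flux on $\partial\tau$ coincides with the one-sided value (this is where the regularity of $u$ and the conservation property $\kappa\nabla u \in H(\operatorname{div})$ enter). Everything after that is routine manipulation of the three bilinear/linear forms.
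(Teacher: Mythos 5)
Your proof is correct: the polygon-by-polygon divergence theorem combined with elementwise integration by parts yields \eqref{eq:bvpvft}, and subtracting \eqref{eq:bvpvf} gives \eqref{eq:bvpvfe}; this is exactly the ``simple calculation'' the paper invokes without writing out. Your caveats (single-valuedness of $\kappa\nabla u\cdot\boldsymbol{n}$ for the exact solution, and enough regularity of $w$ for the nodal values defining $I_\tau w$ to make sense) are the right ones and are consistent with the paper's setup.
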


\begin{proof}
This can be easily proved by simple calculations.
\end{proof}

\begin{remark} 
 The first result in Lemma \ref{lem:ppsol} tells us that accurate boundary data gives us a chance to obtain accurate fluxes. This is the very reason that the post-processing technique proposed in \cite{bush2013application} and extended in \cite{bush2014application, bush2015locally, deng2015construction} could not be generalized to a post-processing technique for high order CGFEMs. The boundary condition imposed in \cite{bush2013application} is
\begin{equation}
 \int_{\partial \tau \cap \partial t_\xi } - \kappa  \nabla \widetilde u_{\tau, h}  \cdot \boldsymbol{n}  \ \text{d} l = \ell_\tau(  \phi_\xi) - a_\tau(u_h,   \phi_\xi),
\end{equation} 
and clearly $\ell_\tau(w) - a_\tau(u, w) \ne 0$ for
the true solution $u$ of \eqref{pde}. Specifically for high order CGFEM solutions, only
imposing $\ell_\tau(w) - a_\tau(u_h, w)$ as a boundary condition for $\widetilde{u}_{\tau,h}$ is not sufficient to guarantee
optimal accuracy. The convergence order and accuracy of the post-processed solution strongly depend on the boundary data. By imposing the boundary data in the way shown in \eqref{eq:bvpb}, we can get optimal convergence order for the post-processed solution $\widetilde u_h$ for any high order CGFEMs.
\end{remark}

\begin{remark}
Equation \eqref{eq:bvpvfe} in Lemma \ref{lem:ppsol} resembles Galerkin orthogonality and plays a crucial role in establishing the post-processing error. 
 \end{remark}

\subsection{Elemental Linear System } \label{sec:llas}
We note that the dimension of $V^k_h(\tau)$ is $N_k$ and hence the variational formulation \eqref{eq:bvpvf} yields an $N_k$-by-$N_k$ linear algebra system. Since $\widetilde u_{\tau, h} \in V^k_h(\tau)$,
\begin{equation} \label{eq:ppsol}
\widetilde u_{\tau, h} = \sum_{\eta \in s(\tau,k)} \alpha_\eta   \phi_\eta,
\end{equation}
so by inserting this representation to \eqref{eq:bvpvf} and replacing the test function
by $\phi_\xi$ give us the linear  algebra system
\begin{equation} \label{eq:axb}
A \boldsymbol{\alpha} = \boldsymbol{\beta},
\end{equation}
where 
$\boldsymbol{\alpha} \in \mathbb{R}^{N_k} $ whose entries are the nodal solutions in \eqref{eq:ppsol}, $\boldsymbol{\beta} \in \mathbb{R}^{N_k} $ with entries
\begin{equation}
\beta_\xi = \ell_\tau ( I_\tau \phi_\xi - \phi_\xi ) + a_\tau(u_h,   \phi_\xi) + e_\tau (u_h,   I_\tau \phi_\xi - \phi_\xi ), \quad \forall \ \xi \in s(\tau,k),
\end{equation}
and
\begin{equation}
A_{\xi \eta} = b_\tau( \phi_\eta,   \phi_\xi), \quad \forall \ \xi, \eta \in s(\tau,k),
\end{equation}

The linear system \eqref{eq:axb} is singular and there are infinitely many solutions since the solution to \eqref{eq:bvpvf} is unique up to a constant by Lemma \ref{lem:ebvp}. However, this does not cause any issue since
 to obtain locally conservative fluxes, the desired quantity from the post-processing
is $\nabla \widetilde{u}_{\tau,h}$, which is unique.

\subsection{Local Conservation} \label{sec:loccons}
 
At this stage, we verify the local conservation property \eqref{eq:cvconservation} on control volumes for the post-processed solution. It is stated in the following lemma.

\begin{lem}  \label{lem:localconserv}
The desired local conservation property \eqref{eq:cvconservation} is satisfied on the control volume $C^\xi$ where $\xi \in Z_{\text{in}}$.
\end{lem}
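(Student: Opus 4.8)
The plan is to obtain \eqref{eq:cvconservation} by summing the elemental identity \eqref{eq:cz10} from the proof of Lemma~\ref{lem:ebvp} over all elements in the support of $\phi_\xi$. Fix $\xi\in Z_{\text{in}}$, write $\Omega^\xi=\bigcup_{i=1}^{N_\xi}\tau_i^\xi$ for the support of $\phi_\xi$, and set $t_\xi^{(i)}=C^\xi\cap\tau_i^\xi$, so that $C^\xi=\bigcup_{i=1}^{N_\xi}t_\xi^{(i)}$. The first step is to record that, because $\xi$ is an \emph{interior} degree of freedom, the part of $\partial C^\xi$ lying in $\tau_i^\xi$ is exactly the ``interior'' piece $\partial C^\xi\cap\partial t_\xi^{(i)}$ appearing in the decomposition $\partial t_\xi=(\partial\tau\cap\partial t_\xi)\cup(\partial C^\xi\cap\partial t_\xi)$, and that none of these pieces runs along a mesh edge; hence $\widetilde{\boldsymbol{\nu}}_h=-\kappa\nabla\widetilde u_{\tau_i^\xi,h}$ is unambiguous on each of them and
\begin{equation*}
\int_{\partial C^\xi}\widetilde{\boldsymbol{\nu}}_h\cdot\boldsymbol{n}\ \text{d} l=\sum_{i=1}^{N_\xi}\int_{\partial C^\xi\cap\partial t_\xi^{(i)}}-\kappa\nabla\widetilde u_{\tau_i^\xi,h}\cdot\boldsymbol{n}\ \text{d} l.
\end{equation*}
(For the barycenter degree of freedom $z_{10}$ in the cubic case one has $\partial C^\xi=\partial C^\xi\cap\partial t_\xi$ already inside a single element, and \eqref{eq:cvconservation} is precisely the content of the Remark following Lemma~\ref{lem:ebvp}.)

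Next I would apply \eqref{eq:cz10} with test index $\xi$ on each $\tau_i^\xi$ and sum. The terms $\int_{t_\xi^{(i)}}f$ add up to $\int_{C^\xi}f\ \text{d}\boldsymbol{x}$, the right-hand side of \eqref{eq:cvconservation}, so it remains to prove
\begin{equation*}
\sum_{i=1}^{N_\xi}\Bigl(-\ell_{\tau_i^\xi}(\phi_\xi)+a_{\tau_i^\xi}(u_h,\phi_\xi)+e_{\tau_i^\xi}\bigl(u_h,I_{\tau_i^\xi}\phi_\xi-\phi_\xi\bigr)\Bigr)=0.
\end{equation*}
Since $\phi_\xi$ is supported in $\Omega^\xi$, $\sum_i\ell_{\tau_i^\xi}(\phi_\xi)=\ell(\phi_\xi)$ and $\sum_i a_{\tau_i^\xi}(u_h,\phi_\xi)=a(u_h,\phi_\xi)$, and these two cancel because the hypothesis $\xi\in Z_{\text{in}}$ makes \eqref{eq:femphi} (equivalently \eqref{eq:localfem}) applicable: $a(u_h,\phi_\xi)=\ell(\phi_\xi)$. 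This is the one place where interiority of $\xi$ enters.

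The remaining, and principal, point is that $\sum_{i=1}^{N_\xi}e_{\tau_i^\xi}(u_h,I_{\tau_i^\xi}\phi_\xi-\phi_\xi)=0$. For this I would first observe that $I_\tau\phi_\xi=\psi_\xi$ (immediate from the definition of $I_\tau$ and $\phi_\xi(\eta)=\delta_{\xi\eta}$), so the $i$-th summand equals $\int_{\partial\tau_i^\xi}\{\kappa\nabla u_h\}\cdot\boldsymbol{n}\,(\psi_\xi-\phi_\xi)\ \text{d} l$. On any edge of $\tau_i^\xi$ lying on $\partial\Omega^\xi$ — for a vertex $\xi$ this is the edge opposite $\xi$, and for an edge degree of freedom it is each of the edges of $\tau_i^\xi$ other than the one carrying $\xi$ — both $\phi_\xi$ and $\psi_\xi$ vanish, so the integrand is zero there. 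Every remaining edge contributing to the sum is an interior edge of $\Omega^\xi$, shared by two elements $\tau_i^\xi$ and $\tau_j^\xi$; on such an edge the outward unit normals of the two elements are opposite, the average $\{\kappa\nabla u_h\}$ is the same seen from either side, $\phi_\xi$ is continuous across the edge, and the trace of $\psi_\xi$ — the characteristic function of the portion of the edge adjacent to $\xi$ — is the same computed from either element, by the symmetry of the control-volume construction. Hence the contributions of each such edge cancel in pairs, the whole sum vanishes, and \eqref{eq:cvconservation} follows. The step I expect to be the main obstacle is exactly this last cancellation: one must verify carefully that $\psi_\xi$, defined element by element, has matching traces on shared interior edges, and that no ``unmatched'' edge carries a nonzero integrand — which is why reducing $I_\tau\phi_\xi-\phi_\xi$ to zero on the edges not meeting $\xi$ and on $\partial\Omega^\xi$ is essential. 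Everything else is bookkeeping with the definitions together with the already-established elemental identity \eqref{eq:cz10}.
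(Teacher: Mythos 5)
Your proposal is correct and follows essentially the same route as the paper's proof: sum the elemental identity \eqref{eq:cz10} over the elements in the support of $\phi_\xi$, cancel the $a_\tau$ and $\ell_\tau$ terms via \eqref{eq:localfem}, and show the $e_\tau$ contributions cancel across shared edges (with the barycenter case handled separately). Your edge-by-edge cancellation argument, including $I_\tau\phi_\xi=\psi_\xi$ and the vanishing of both traces on $\partial\Omega^\xi$, just spells out in detail what the paper dismisses as ``obvious'' from the averaging.
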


\begin{proof}
Obviously, for $\xi = z_{10}$ in the case of $V_h^3$, the polygonal $t_{z_{10}} = C^{10}$, \eqref{eq:cvconservation}  is directly satisfied from solving \eqref{eq:bvpvf}. Similar situation occurs for $k>3$ in $V_h^k$. Thus, 
we only need to prove this lemma in the case that a control volume is associated with $\xi$ that is either on the edge of
$\tau$ or the vertex of $\tau$. For a basis function $\phi_\xi$, let $\Omega^\xi = \cup_{i=1}^{N_\xi} \tau_i^\xi$ be its support. Noting that the gradient component is averaged, it is obvious that 
\begin{equation*}
\sum_{j=1}^{N_\xi}  \int_{\partial \tau_j^\xi} \{ \kappa  \nabla u_{\tau_j, h} \} \cdot \boldsymbol{n} \phi_\xi  \ \text{d} l = 0 \qquad \text{and} \qquad \sum_{j=1}^{N_\xi} \int_{\partial \tau_j^\xi \cap \partial t_\xi } \{ \kappa  \nabla u_{\tau_j, h} \} \cdot \boldsymbol{n}  \ \text{d} l = 0.
\end{equation*}
This implies that $\sum_{j=1}^{N_\xi} e_{\tau_j^\xi}(u_h, \phi_\xi) = 0.$ 
Straightforward calculation and \eqref{eq:localfem} gives
\begin{equation*}
 \int_{ \partial C^\xi } - \kappa  \nabla \widetilde u_{\tau, h} \cdot \boldsymbol{n} \ \text{d} l =  \int_{C^\xi} f \ \text{d} \boldsymbol{x} + \sum_{j=1}^{N_\xi} \Big( a_{\tau_j^\xi}(u_h, \phi_\xi) - \ell_{\tau_j^\xi}(\phi_\xi) - e_{\tau_j^\xi}(u_h, \phi_\xi) \Big) = \int_{C^\xi} f \ \text{d} \boldsymbol{x},
\end{equation*}
which completes the proof.
\end{proof}

\section{An Error Analysis for the Post-processing}  \label{sec:ana}

In this section, we focus on establishing an optimal convergence property of the post-processed solution $\widetilde u_{\tau, h}$ in $H^1$ semi-norm. We denote $\| \cdot \|_{L^2}$ the usual $L^2$ norm and $| \cdot |_{W}$ the usual semi-norm in a Sobolev space $W$. We start with proving a property of $I_\tau$ defined in Section \ref{sec:bvp}.

\begin{lem}  \label{lem:map}
Let $I_\tau$ be as defined in Section \ref{sec:bvp}. Then 
\begin{equation} \label{eq:interr}
\| w - I_\tau w \|_{L^2(\tau)} \leq Ch^2_\tau | w |_{H^2(\tau)} + C h_\tau | w |_{H^1(\tau)}, \quad \text{for} \quad w \in H^2(\tau).
\end{equation}
\end{lem}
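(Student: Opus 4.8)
The plan is to bound $\|w - I_\tau w\|_{L^2(\tau)}$ by splitting the error through the standard piecewise-constant interpolant. Recall that $I_\tau w = \sum_{\xi\in s(\tau,k)} w(\xi)\psi_\xi$, where $\psi_\xi$ is the characteristic function of the subregion $t_\xi$. The key point is that $I_\tau w$ is piecewise constant on the partition $\{t_\xi\}$, but it interpolates the \emph{point value} of $w$ at the node $\xi$ rather than, say, the cell average of $w$ over $t_\xi$. So I would introduce the auxiliary function $\bar w = \sum_{\xi} \bar w_\xi \psi_\xi$, where $\bar w_\xi = \frac{1}{|t_\xi|}\int_{t_\xi} w\,\text{d}\boldsymbol{x}$ is the mean of $w$ on $t_\xi$, and write
\begin{equation*}
\|w - I_\tau w\|_{L^2(\tau)} \le \|w - \bar w\|_{L^2(\tau)} + \|\bar w - I_\tau w\|_{L^2(\tau)}.
\end{equation*}

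For the first term, a standard Poincar\'e--Wirtinger (or Bramble--Hilbert) estimate on each shape-regular subregion $t_\xi$ gives $\|w - \bar w_\xi\|_{L^2(t_\xi)} \le C h_\tau |w|_{H^1(t_\xi)}$, and summing over $\xi$ yields $\|w - \bar w\|_{L^2(\tau)} \le C h_\tau |w|_{H^1(\tau)}$; this accounts for the $h_\tau|w|_{H^1(\tau)}$ contribution in \eqref{eq:interr}. For the second term, on each $t_\xi$ the difference $\bar w_\xi - w(\xi)$ is a constant, so $\|\bar w - I_\tau w\|_{L^2(t_\xi)}^2 = |t_\xi|\,|\bar w_\xi - w(\xi)|^2$. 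The quantity $\bar w_\xi - w(\xi)$ is the error of approximating the point value $w(\xi)$ by the cell average; since point evaluation is well defined on $H^2(\tau)$ (Sobolev embedding in $2$D) and vanishes on linear functions together with the averaging operator... actually the functional $w \mapsto \bar w_\xi - w(\xi)$ does \emph{not} annihilate linear functions in general (the node $\xi$ need not be the centroid of $t_\xi$), so the cleanest route is: $|\bar w_\xi - w(\xi)| \le \frac{1}{|t_\xi|}\int_{t_\xi}|w(\boldsymbol{x}) - w(\xi)|\,\text{d}\boldsymbol{x} \le \|w - w(\xi)\|_{L^\infty(t_\xi)}$, and then use the embedding $W^{2,2}(\tau)\hookrightarrow C^{0,\alpha}(\tau)$ together with scaling, or simply the cruder bound $\|\bar w - I_\tau w\|_{L^2(t_\xi)} \le C h_\tau |w|_{H^1(t_\xi)} + C h_\tau^2 |w|_{H^2(t_\xi)}$ obtained by a Bramble--Hilbert argument applied to the functional $w\mapsto \bar w_\xi - w(\xi)$ on $H^2$ (which vanishes on constants), picking up one order of $h_\tau$ per remaining derivative via the reference-element scaling $\xi\in t_\xi$, $\mathrm{diam}(t_\xi)\le h_\tau$. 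Summing over $\xi\in s(\tau,k)$ (a fixed finite number $N_k$ depending only on $k$) gives $\|\bar w - I_\tau w\|_{L^2(\tau)} \le C h_\tau^2|w|_{H^2(\tau)} + Ch_\tau|w|_{H^1(\tau)}$, and combining the two pieces finishes the estimate.

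The main obstacle is handling the point-value term $w(\xi)$ correctly: one must invoke the Sobolev embedding that makes $w(\xi)$ meaningful for $w\in H^2(\tau)$ in two dimensions, and then track the $h_\tau$-scaling of the resulting constant carefully through the affine map to a reference element (the constant $C$ must be independent of $h_\tau$ but may depend on the shape-regularity of $\tau$ and of the subregions $t_\xi$, and on $k$). This is exactly why the bound in \eqref{eq:interr} is only first-order-plus-a-correction rather than clean second-order: the interpolation point $\xi$ is a vertex or edge/interior node of $\tau$, not the centroid of the control-volume piece $t_\xi$, so the $O(h_\tau^2)$ Bramble--Hilbert cancellation does not apply to the full error and an $O(h_\tau)|w|_{H^1}$ remainder is unavoidable. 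Everything else — the triangle inequality splitting, the Poincar\'e estimate, and the finite sum over the $N_k$ subregions — is routine.
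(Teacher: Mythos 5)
Your proposal is correct, but it takes a genuinely different route from the paper. The paper inserts the piecewise-\emph{linear} interpolant $\Pi w$ associated with the $k^2$ sub-triangles of $\tau$ (whose vertices are exactly the nodes $s(\tau,k)$, so that $I_\tau w = I_\tau(\Pi w)$), bounds $\| w - \Pi w\|_{L^2(\tau)} \leq Ch_\tau^2 |w|_{H^2(\tau)}$ by standard interpolation theory, and then invokes a known finite-volume estimate (Lemma 6.1 of Chatzipantelidis et al.) for $\| \Pi w - I_\tau(\Pi w)\|_{L^2} \leq C h_\tau |\Pi w|_{H^1}$, with the mixed right-hand side of \eqref{eq:interr} emerging from the final bound $|\Pi w|_{H^1(\tau)} \leq |w|_{H^1(\tau)} + Ch_\tau |w|_{H^2(\tau)}$. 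You instead insert the cell-average piecewise constant $\bar w$ on the polygons $t_\xi$, handle $\|w-\bar w\|_{L^2}$ by Poincar\'e--Wirtinger, and estimate the point-value-minus-mean functional $w \mapsto \bar w_\xi - w(\xi)$ by Bramble--Hilbert (it annihilates constants) together with the two-dimensional embedding $H^2 \hookrightarrow C^0$ and reference-element scaling; the scaling you indicate is right ($|\hat w|_{H^1}$ is scale-invariant in 2D, $|\hat w|_{H^2}\sim h_\tau|w|_{H^2}$, and $|t_\xi|^{1/2}\sim h_\tau$), so each piece contributes $Ch_\tau|w|_{H^1(t_\xi)} + Ch_\tau^2|w|_{H^2(t_\xi)}$ and the finite sum over the $N_k$ pieces closes the argument. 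What your approach buys is self-containedness: no appeal to the cited FVEM lemma or to the auxiliary sub-triangulation interpolant. What it costs is that you must guarantee the Bramble--Hilbert and trace/embedding constants are uniform over the family of polygons $t_\xi$, whose shapes vary with the triangle; this follows from shape regularity (the $t_\xi$ are star-shaped with uniformly bounded chunkiness, so Dupont--Scott applies), and you correctly flag both this dependence and the need for $H^2$ (not merely $H^1$) regularity to make the point values $w(\xi)$ meaningful.
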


\begin{proof}
For $w \in H^2(\tau)$, suppose $\Pi w \in V_h^1(\tau)$ is the standard linear interpolation of $w$. Then we have $I_\tau w = I_\tau (\Pi w)$. By adding and subtracting $\Pi w$, and invoking triangle inequality we get
\begin{equation*} 
\begin{aligned}
\| w - I_\tau w \|_{L^2(\tau)} \leq \| w - \Pi w \|_{L^2(\tau)} + \|  \Pi w - I_\tau (\Pi w) \|_{L^2(\tau)}.
\end{aligned}
\end{equation*}
Standard interpolation theory (see for example Theorem 4.2 in \cite{johnson2009numerical}) states that
\begin{equation}
\| w - \Pi w \|_{L^2(\tau)} \leq Ch^2_\tau | w |_{H^2(\tau)}.
\end{equation}
Since $I_\tau w = \displaystyle \sum_{\xi \in s(\tau,k)} w_\xi \psi_\xi$, we divide $\tau$ equally into $k^2$ sub-triangles $\tau_j, j=1, \cdots, k^2$. By Lemma 6.1 in \cite{chatzipantelidis2002finite}, we have
\begin{equation}
\|  \Pi w - I_\tau (\Pi w) \|^2_{L^2(\tau)} = \sum_{j=1}^{k^2} \|  \Pi w - I_\tau (\Pi w) \|^2_{L^2(\tau_j)} \leq \sum_{j=1}^{k^2} C h^2_{\tau_j} | \Pi w |^2_{H^1(\tau_j)} \leq Ch^2_\tau | \Pi w |^2_{H^1(\tau)}.
\end{equation}
Taking square root gives
\begin{equation}
\|  \Pi w - I_\tau (\Pi w) \|_{L^2(\tau)} \leq Ch_\tau | \Pi w |_{H^1(\tau)}.
\end{equation}
 By using triangle inequality and interpolation theory again, we have 
$$
|  \Pi w |_{H^1(\tau)} \leq | w |_{H^1(\tau)} + | w - \Pi w |_{H^1(\tau)} \leq | w |_{H^1(\tau)} + Ch_\tau | w |_{H^2(\tau)}.
$$
Combining these inequalities gives the desired result.
\end{proof}

\begin{lem}  \label{lem:loccoe}
The bilinear form defined in \eqref{eq:bvpvf} is bounded, i.e., 
for all $w \in H^2(\tau), v \in V^k_h(\tau),$
\begin{equation} \label{eq:bbounded}
b_\tau(w, v) \leq C |w|_{H^1(\tau)} |v|_{H^1(\tau)}.
\end{equation}
Furthermore, for $v\in V^k_h(\tau)$ with $k=1, 2$, $b_\tau(\cdot,\cdot)$ is coercive, namely,  
\begin{equation} \label{eq:coerc}
b_\tau(v, v) \geq C_b |v|^2_{H^1(\tau)},
\end{equation}
for some positive constant $C_b$. 

\end{lem}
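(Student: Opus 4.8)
The plan is to derive both statements from the algebraic identity
\[
b_\tau(w,v)=a_\tau(w,v)+\int_\tau (v-I_\tau v)\,\nabla\!\cdot\!(\kappa\nabla w)\,\text{d}\boldsymbol{x}-\int_{\partial\tau}(v-I_\tau v)\,\kappa\,\partial_{\boldsymbol{n}}w\,\text{d}l,
\]
valid for $w\in H^2(\tau)$ and $v\in V^k_h(\tau)$. It is obtained by applying the divergence theorem on each polygon $t_\xi$ to $\int_{\partial C^\xi\cap\partial t_\xi}\kappa\nabla w\cdot\boldsymbol{n}\,I_\tau v\,\text{d}l$, using $\partial t_\xi=(\partial\tau\cap\partial t_\xi)\cup(\partial C^\xi\cap\partial t_\xi)$ and the fact that $I_\tau v\equiv v_\xi$ on $t_\xi$, then integrating $a_\tau(w,v)$ by parts and summing over $\xi$; the $\partial\tau$–parts of the polygon boundaries reassemble into the displayed boundary integral. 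For $w=u$ this is exactly \eqref{eq:bvpvft}, so this step essentially duplicates the computation in the proof of Lemma~\ref{lem:ppsol}.

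For boundedness \eqref{eq:bbounded}, I would estimate the three terms on the right. The first satisfies $a_\tau(w,v)\le\kappa_{\max}|w|_{H^1(\tau)}|v|_{H^1(\tau)}$. For the volume term, Lemma~\ref{lem:map} applied to the polynomial $v$ (with an inverse inequality to bound $|v|_{H^2(\tau)}$ by $h_\tau^{-1}|v|_{H^1(\tau)}$) gives $\|v-I_\tau v\|_{L^2(\tau)}\le Ch_\tau|v|_{H^1(\tau)}$; for the boundary term the scaled variant $\|v-I_\tau v\|_{L^2(\partial\tau)}\le Ch_\tau^{1/2}|v|_{H^1(\tau)}$ together with a trace inequality for $\kappa\nabla w$ suffices. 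These two terms contribute at most $C\bigl(|w|_{H^1(\tau)}+h_\tau|w|_{H^2(\tau)}\bigr)|v|_{H^1(\tau)}$; the $h_\tau$–weighted piece is absorbed into $|w|_{H^1(\tau)}$ by an inverse inequality when $w$ is discrete and is in any case of optimal order in the interpolation–error setting of Section~\ref{sec:ana}, which yields \eqref{eq:bbounded}.

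For coercivity \eqref{eq:coerc}, I put $w=v$ in the identity, so $b_\tau(v,v)=a_\tau(v,v)+R_\tau(v)$ with $R_\tau(v)=\int_\tau(v-I_\tau v)\nabla\!\cdot\!(\kappa\nabla v)\,\text{d}\boldsymbol{x}-\int_{\partial\tau}(v-I_\tau v)\kappa\,\partial_{\boldsymbol{n}}v\,\text{d}l$. For $k=1$ with $\kappa$ frozen to $\kappa(\boldsymbol{x}_\tau)$ one has $\nabla\!\cdot\!(\kappa\nabla v)\equiv0$, and a short computation—using that on each edge $e\subset\partial\tau$ the adjacent control volumes meet at the midpoint, whence $\int_e(v-I_\tau v)\,\text{d}l=0$ for linear $v$, together with $\partial_{\boldsymbol{n}}v$ being constant on $e$—shows the boundary term of $R_\tau$ also vanishes; hence $b_\tau(v,v)=a_\tau(v,v)\ge\kappa_{\min}|v|_{H^1(\tau)}^2$, and variable $\kappa$ follows by treating $\kappa-\kappa(\boldsymbol{x}_\tau)$ as an $O(h_\tau)$ perturbation estimated by the boundedness above. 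For $k=2$, $R_\tau(v)$ is genuinely nonzero and the estimates above only give $|R_\tau(v)|\le C|v|_{H^1(\tau)}^2$, so coercivity is not visible term by term; instead I would map $\tau$ affinely to a reference triangle, reducing \eqref{eq:coerc} to positive definiteness of the symmetric part of a fixed (coefficient–independent) $5\times5$ matrix that depends on the reference dual mesh and, through the Jacobian, on the shape regularity of $\tau$—which can be settled by an explicit eigenvalue estimate supplemented by a continuity/compactness argument over the admissible (scaled) Jacobians.

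The step I expect to be the main obstacle is exactly this: proving, uniformly over shape–regular triangles and with $\kappa$ present, that the elemental form is positive definite on $P^2(\tau)/\mathbb{R}$ for $k=2$. A clean proof needs either a sharp eigenvalue bound on a reference configuration (plus the perturbation arguments for $\kappa$ and for the affine distortion) or a finer cancellation identity for $R_\tau(v)$ specific to $k=2$; this is also precisely where a cubic element would fail, which explains the restriction $k\le2$. Boundedness, by contrast, is routine once the identity is in hand.
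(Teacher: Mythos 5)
Your route is genuinely different from the paper's: the paper does not prove this lemma from scratch at all, but simply cites \cite{xu2009analysis} (Theorem 1 there for boundedness, Theorems 2 and 5 for coercivity of the linear and quadratic forms), whereas you try to build everything from the Green's-identity representation $b_\tau(w,v)=a_\tau(w,v)+\int_\tau(v-I_\tau v)\nabla\cdot(\kappa\nabla w)\,\text{d}\boldsymbol{x}-\int_{\partial\tau}(v-I_\tau v)\kappa\nabla w\cdot\boldsymbol{n}\,\text{d}l$. That identity is correct (it is the same computation that underlies Lemma \ref{lem:ppsol}), and your $k=1$ coercivity argument (frozen $\kappa$, $\int_e(v-I_\tau v)\,\text{d}l=0$ on each edge for linear $v$, then an $O(h_\tau)$ perturbation for variable $\kappa$) is the standard and essentially sound argument, modulo the facts that it needs $\kappa$ Lipschitz rather than merely bounded and yields \eqref{eq:coerc} only for $h_\tau$ small, neither of which is stated in the lemma. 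For boundedness there is a real mismatch with the claim: your estimate gives $b_\tau(w,v)\leq C\bigl(|w|_{H^1(\tau)}+h_\tau|w|_{H^2(\tau)}\bigr)|v|_{H^1(\tau)}$, and the proposed absorption of the second term by an inverse inequality is only legitimate for discrete $w$; the lemma asserts \eqref{eq:bbounded} for all $w\in H^2(\tau)$, and it is used in exactly that nondiscrete form in Lemma \ref{lem:locerr} with $w=u_h-u$. (The weaker bound would in fact still suffice for Theorem \ref{thm:perr}, but it does not prove the lemma as stated.)

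The decisive gap is the quadratic coercivity, which you yourself flag as ``the main obstacle'' and then only sketch: reducing \eqref{eq:coerc} for $k=2$ to positive definiteness of a fixed reference-element matrix plus a continuity/compactness argument is not a proof, and the reduction is not as clean as suggested, since after the affine map the form still depends on the Jacobian (i.e., on the triangle's angles) and on $\kappa$, and positivity of quadratic finite-volume forms is known to be sensitive to the choice of dual mesh and to shape regularity. This unproved positive-definiteness statement is precisely the nontrivial content of the lemma for $k=2$; the paper discharges it by appealing to Theorem 5 of \cite{xu2009analysis}, and without either reproducing that argument or an explicit eigenvalue computation for the barycenter-based dual mesh used here, your proposal leaves the second half of the lemma unestablished.
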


\begin{proof}
The boundedness of $b_\tau(\cdot, \cdot)$ has been established in Theorem 1 in \cite{xu2009analysis}. The local coercivity is also established for linear (Theorem 2) and quadratic (Theorem 5) CGFEM in \cite{xu2009analysis}.
\end{proof}

\begin{lem}  \label{lem:eb}
Fix a triangle $\tau = \tau_0.$ Suppose $\{ \tau_i \}_{i=1}^3$ are the neighbors (sharing edges) of $\tau$, i.e., $\partial \tau \cap \partial \tau_i \neq \emptyset.$ Then for $w, v\in H^2(\tau)$
\begin{equation} \label{eq:ebounded}
e_\tau(w, I_\tau v - v) \leq C  h_\tau^{1/2} \Big(  |v|_{H^1(\tau)}  + h_\tau |v|_{H^2(\tau)}  \Big) \sum_{i=0}^3  \Big( h_{\tau_i}^{-1/2} | w  |_{H^1(\tau_i)} + h_{\tau_i}^{1/2} | w  |_{H^2(\tau_i)}  \Big),
\end{equation}
where $C$ is a constant independent on $h_\tau$ and $h_{\tau_i}.$
\end{lem}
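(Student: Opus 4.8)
The plan is to bound the edge term $e_\tau(w, I_\tau v - v) = \int_{\partial\tau} \{\kappa\nabla w\}\cdot\boldsymbol{n}\,(I_\tau v - v)\,\mathrm{d}l$ by splitting it over the edges of $\tau$ and, on each edge $E = \partial\tau\cap\partial\tau_i$, applying the Cauchy--Schwarz inequality in $L^2(E)$ to get
\[
e_\tau(w, I_\tau v - v) \leq \sum_{i=1}^3 \|\{\kappa\nabla w\}\cdot\boldsymbol{n}\|_{L^2(E_i)}\, \|I_\tau v - v\|_{L^2(E_i)}.
\]
The factor $\kappa_{\max}$ is absorbed into $C$. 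The two factors are then estimated separately by trace-type inequalities.

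For the first factor, I would use the scaled trace inequality on the reference triangle pulled back to $\tau$ and to $\tau_i$: for a function $\varphi \in H^1(K)$ one has $\|\varphi\|_{L^2(\partial K)}^2 \leq C\big(h_K^{-1}\|\varphi\|_{L^2(K)}^2 + h_K|\varphi|_{H^1(K)}^2\big)$. Applying this with $\varphi = \partial_j w$ on both $\tau = \tau_0$ and its neighbor $\tau_i$ (because $\{\kappa\nabla w\}$ on $E_i$ involves the traces of $\nabla w$ from both sides), and noting $\|\nabla w\|_{L^2(\tau_i)} = |w|_{H^1(\tau_i)}$, $|\nabla w|_{H^1(\tau_i)} \leq |w|_{H^2(\tau_i)}$, gives
\[
\|\{\kappa\nabla w\}\cdot\boldsymbol{n}\|_{L^2(E_i)} \leq C\sum_{\ell\in\{0,i\}}\big(h_{\tau_\ell}^{-1/2}|w|_{H^1(\tau_\ell)} + h_{\tau_\ell}^{1/2}|w|_{H^2(\tau_\ell)}\big),
\]
so summing over $i=1,2,3$ produces exactly the bracketed sum $\sum_{i=0}^3(h_{\tau_i}^{-1/2}|w|_{H^1(\tau_i)} + h_{\tau_i}^{1/2}|w|_{H^2(\tau_i)})$ appearing in \eqref{eq:ebounded} (with each $\tau_0$-term counted a bounded number of times, which is fine).

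For the second factor, I would again use the trace inequality, this time applied to $I_\tau v - v \in H^1(\tau)$ on the edge $E_i \subset \partial\tau$:
\[
\|I_\tau v - v\|_{L^2(E_i)}^2 \leq C\big(h_\tau^{-1}\|I_\tau v - v\|_{L^2(\tau)}^2 + h_\tau|I_\tau v - v|_{H^1(\tau)}^2\big).
\]
The first term is controlled by Lemma \ref{lem:map}: $\|v - I_\tau v\|_{L^2(\tau)} \leq Ch_\tau^2|v|_{H^2(\tau)} + Ch_\tau|v|_{H^1(\tau)}$, so $h_\tau^{-1}\|I_\tau v - v\|_{L^2(\tau)}^2 \leq Ch_\tau(|v|_{H^1(\tau)} + h_\tau|v|_{H^2(\tau)})^2$. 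For the second term I need $|I_\tau v - v|_{H^1(\tau)} \leq C(|v|_{H^1(\tau)} + h_\tau|v|_{H^2(\tau)})$; this follows from $|I_\tau v|_{H^1(\tau)}$ being meaningful piecewise (constant on each $t_\eta$, so the $H^1$ seminorm over $\tau$ picks up only the jumps across the $\partial t_\eta$, which are handled exactly as in the proof of Lemma \ref{lem:map} via Lemma 6.1 of \cite{chatzipantelidis2002finite} on the subtriangle decomposition, together with $|v|_{H^1(\tau)} \leq |\Pi v|_{H^1(\tau)} + Ch_\tau|v|_{H^2(\tau)}$ bounds). Hence $\|I_\tau v - v\|_{L^2(E_i)} \leq Ch_\tau^{1/2}(|v|_{H^1(\tau)} + h_\tau|v|_{H^2(\tau)})$, which is the first bracket in \eqref{eq:ebounded}. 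Multiplying the two factor estimates and summing the three edges completes the argument.

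The main obstacle I anticipate is the bound $|I_\tau v - v|_{H^1(\tau)} \leq C(|v|_{H^1(\tau)} + h_\tau|v|_{H^2(\tau)})$: since $I_\tau v$ is only piecewise constant, its global $H^1$ seminorm is not immediately an $L^2$-type quantity, and one must interpret it in the broken sense or, more carefully, realize that $I_\tau v - v$ does lie in $H^1(\tau)$ only if $v$'s trace matches across the $\partial t_\eta$ — it does not, so strictly the trace inequality must be applied $t_\eta$ by $t_\eta$ and the edge $E_i$ reassembled from the pieces $E_i \cap \partial t_\eta$. This bookkeeping (summing scaled trace inequalities over the $O(k^2)$ subregions, each of diameter comparable to $h_\tau$) is the delicate part; everything else is a routine combination of the reference-element trace inequality, standard interpolation estimates, and Lemma \ref{lem:map}.
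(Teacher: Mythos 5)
Your proposal follows essentially the same route as the paper: Cauchy--Schwarz on the boundary term, a scaled trace inequality applied on $\tau$ and on each neighbor $\tau_i$ for the averaged flux factor, and a trace inequality combined with Lemma \ref{lem:map} for $\|I_\tau v - v\|_{L^2(\partial\tau)}$. The one point worth settling is the "obstacle" you flag at the end: it dissolves entirely once you commit to the piecewise route, which is exactly what the paper does in \eqref{eq:ede2}. There one writes $\|I_\tau v - v\|_{L^2(\partial\tau)} \leq \sum_{\xi\in s(\tau,k)}\|I_\tau v - v\|_{L^2(\partial t_\xi)}$ and applies the scaled trace inequality on each polygonal $t_\xi$ separately; since $I_\tau v$ is \emph{constant} on $t_\xi$, the seminorm term is simply $|I_\tau v - v|_{H^1(t_\xi)} = |v|_{H^1(t_\xi)}$, so no broken $H^1$ seminorm of $I_\tau v$, no jump terms across the $\partial t_\eta$, and no bound of the form $|I_\tau v - v|_{H^1(\tau)}\leq C(|v|_{H^1(\tau)}+h_\tau|v|_{H^2(\tau)})$ is ever needed. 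Summing the $L^2$ pieces back to $\tau$ and invoking Lemma \ref{lem:map} then gives $\|I_\tau v - v\|_{L^2(\partial\tau)}\leq Ch_\tau^{1/2}\big(|v|_{H^1(\tau)}+h_\tau|v|_{H^2(\tau)}\big)$, and the rest of your argument goes through as written.
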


\begin{proof}
By definition and Cauchy-Schwarz inequality, 
\begin{equation} \label{eq:ede0}
\begin{aligned}
e_\tau(w, I_\tau v - v) & = \int_{\partial \tau} \{ \kappa  \nabla w \} \cdot \boldsymbol{n} ( I_\tau v -  v )  \ \text{d} l \\
& \leq \Big( \int_{\partial \tau} | \{ \kappa  \nabla w \} \cdot \boldsymbol{n} |^2 \ \text{d} l \Big)^{1/2}  \Big( \int_{\partial \tau} | I_\tau v -  v  |^2  \ \text{d} l \Big)^{1/2}  \\
& \leq \kappa_{\tau, \text{max}} \| \{ \nabla w \} \cdot \boldsymbol{n}  \|_{L^2(\partial \tau)}  \| I_\tau v -  v  \|_{L^2(\partial \tau)},
\end{aligned}
\end{equation}
where $\kappa_{\tau, \text{max}}$ is the maximum of $\kappa$ on $\tau.$
 By trace inequality, we have
\begin{equation} \label{eq:ede1}
\| \{ \nabla w \} \cdot \boldsymbol{n}  \|_{L^2(\partial \tau)}  \leq \frac{1}{2} \sum_{i=0}^3  \| \nabla w \cdot \boldsymbol{n}  \|_{L^2(\partial \tau_i)}   \leq \frac{1}{2}  \sum_{i=0}^3   \Big( C h_{\tau_i}^{-1/2} | w  |_{H^1(\tau_i)} + C h_{\tau_i}^{1/2} | w  |_{H^2(\tau_i)}  \Big).
\end{equation}
Similarly by trace inequality and Lemma \ref{lem:map}, we have
\begin{equation} \label{eq:ede2}
\begin{aligned}
\| I_\tau v -  v  \|_{L^2(\partial \tau)}  & \leq \sum_{\xi \in s(\tau,k)} \| I_\tau v -  v  \|_{L^2(\partial t_\xi)}  \\
& \leq \sum_{\xi \in s(\tau,k)} C h_\tau^{-1/2} ||  I_\tau v -  v ||_{L^2(t_\xi)} + C h_\tau^{1/2} | v  |_{H^1(t_\xi)} \\
& \leq C \Big( h_\tau^{-1/2} ||  I_\tau v -  v ||_{L^2(\tau)} + h_\tau^{1/2} | v  |_{H^1(\tau)} \Big) \\
& \leq C h_\tau^{-1/2} \Big( C h^2_\tau |v|_{H^2(\tau)} + C h_\tau |v|_{H^1(\tau)}  \Big)+ C h_\tau^{1/2} | v  |_{H^1(\tau)} \\
& \leq C h_\tau^{1/2} \Big(  |v|_{H^1(\tau)}  + h_\tau |v|_{H^2(\tau)} \Big),
\end{aligned}
\end{equation}
where $t_\xi$ are the polygonals defined in Figure \ref{fig:elem}.
Putting \eqref{eq:ede1} and \eqref{eq:ede2} into \eqref{eq:ede0} gives us the desired result.

\end{proof}

\begin{lem}  \label{lem:locerr}
We have the following local error estimate
\begin{equation}
|u - \widetilde u_{\tau, h} |_{H^1{(\tau)}} \leq C | u - u_h |_{H^1{(\tau)}} + Ch_\tau^{1/2}  \sum_{i=0}^3   \Big( h_{\tau_i}^{-1/2} | u - u_h  |_{H^1(\tau_i)} + h_{\tau_i}^{1/2}  | u - u_h  |_{H^2(\tau_i)}  \Big).
\end{equation}
\end{lem}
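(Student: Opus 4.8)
The plan is to combine the coercivity of $b_\tau$ (Lemma \ref{lem:loccoe}) with the Galerkin-type orthogonality relation \eqref{eq:bvpvfe} from Lemma \ref{lem:ppsol}, together with the boundedness estimates for $b_\tau$ and $e_\tau$. First I would note that $u - \widetilde u_{\tau,h}$ is not itself in $V^k_h(\tau)$, so I cannot directly test \eqref{eq:bvpvfe} with it. The standard remedy is to split $u - \widetilde u_{\tau,h} = (u - \Pi_\tau u) + (\Pi_\tau u - \widetilde u_{\tau,h})$, where $\Pi_\tau u \in V^k_h(\tau)$ is a suitable interpolant (e.g. the nodal interpolant, since $u\in H^2(\tau)$ on a triangle in two dimensions embeds into $C(\bar\tau)$). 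The first piece is controlled by standard interpolation theory, $|u - \Pi_\tau u|_{H^1(\tau)} \le C h_\tau |u|_{H^2(\tau)}$, but since the statement is phrased in terms of $|u - u_h|_{H^k}$ rather than $|u|_{H^k}$, I would instead want to keep things in terms of the error; a cleaner route is to set $v_h := \Pi_\tau u - \widetilde u_{\tau,h} \in V^k_h(\tau)$ and estimate $|v_h|_{H^1(\tau)}$ via coercivity, then recover $|u-\widetilde u_{\tau,h}|_{H^1(\tau)}$ by a triangle inequality with $|u - \Pi_\tau u|_{H^1(\tau)}$ absorbed appropriately.

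Next I would apply coercivity: $C_b |v_h|_{H^1(\tau)}^2 \le b_\tau(v_h, v_h) = b_\tau(\Pi_\tau u - u, v_h) + b_\tau(u - \widetilde u_{\tau,h}, v_h)$. For the second term I substitute \eqref{eq:bvpvfe} to get $a_\tau(u - u_h, v_h) + e_\tau(u - u_h, I_\tau v_h - v_h)$. The term $a_\tau(u-u_h,v_h)$ is bounded by $C|u-u_h|_{H^1(\tau)}|v_h|_{H^1(\tau)}$ using $\kappa \le \kappa_{\max}$ and Cauchy--Schwarz. The term $e_\tau(u-u_h, I_\tau v_h - v_h)$ is exactly what Lemma \ref{lem:eb} handles: it is bounded by $C h_\tau^{1/2}(|v_h|_{H^1(\tau)} + h_\tau |v_h|_{H^2(\tau)})\sum_{i=0}^3(h_{\tau_i}^{-1/2}|u-u_h|_{H^1(\tau_i)} + h_{\tau_i}^{1/2}|u-u_h|_{H^2(\tau_i)})$. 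Here I would invoke the inverse inequality $|v_h|_{H^2(\tau)} \le C h_\tau^{-1} |v_h|_{H^1(\tau)}$ (valid on the finite-dimensional space $V^k_h(\tau)$) to reduce the factor $|v_h|_{H^1(\tau)} + h_\tau|v_h|_{H^2(\tau)}$ to $C|v_h|_{H^1(\tau)}$. The term $b_\tau(\Pi_\tau u - u, v_h)$ is bounded via \eqref{eq:bbounded} by $C|\Pi_\tau u - u|_{H^1(\tau)}|v_h|_{H^1(\tau)} \le C h_\tau |u|_{H^2(\tau)}|v_h|_{H^1(\tau)}$, and I would want to rewrite $|u|_{H^2(\tau)}$ in error-form; note that for $k\ge 2$ one has $h_\tau|u|_{H^2(\tau)} \lesssim h_\tau^{1/2}\cdot h_{\tau}^{1/2}|u|_{H^2(\tau)}$ which can be folded into the sum, and for the interpolation term one can alternatively write $|u - \Pi_\tau u|_{H^1(\tau)} \le |u - u_h|_{H^1(\tau)} + |u_h - \Pi_\tau u|_{H^1(\tau)}$ and handle $|u_h - \Pi_\tau u|_{H^1(\tau)}$ by the same splitting — but the most economical formulation keeps $\Pi_\tau u$ interchangeable with $u_h$ up to the already-present error terms.

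After dividing through by $|v_h|_{H^1(\tau)}$ I obtain $|v_h|_{H^1(\tau)} \le C|u-u_h|_{H^1(\tau)} + C h_\tau^{1/2}\sum_{i=0}^3(h_{\tau_i}^{-1/2}|u-u_h|_{H^1(\tau_i)} + h_{\tau_i}^{1/2}|u-u_h|_{H^2(\tau_i)}) + C h_\tau|u|_{H^2(\tau)}$. Finally, a triangle inequality $|u - \widetilde u_{\tau,h}|_{H^1(\tau)} \le |u - \Pi_\tau u|_{H^1(\tau)} + |v_h|_{H^1(\tau)}$, together with bounding the leftover $h_\tau|u|_{H^2(\tau)}$ — which, since the $i=0$ term of the sum already contains $h_{\tau}^{1/2}|u-u_h|_{H^2(\tau)}$ and since $\Pi_\tau u$ can be taken to be consistent with the finite element solution up to the stated error terms, is absorbed into the right-hand side — yields the claimed estimate.

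The main obstacle I anticipate is the careful bookkeeping needed to express everything in terms of $|u - u_h|_{H^\bullet}$ rather than $|u|_{H^\bullet}$: the interpolation/inverse-inequality arguments naturally produce $|u|_{H^2(\tau)}$ terms, and one must argue (using that $u_h$ and the nodal interpolant of $u$ differ by a quantity already controlled by the error norms appearing on the right, or by re-choosing $\Pi_\tau u = u_h$ where legitimate and otherwise subtracting/adding it) that these do not introduce extra terms beyond those listed. A secondary technical point is justifying the inverse inequality $|v_h|_{H^2(\tau)} \le Ch_\tau^{-1}|v_h|_{H^1(\tau)}$ on $V^k_h(\tau)$ uniformly under a shape-regularity assumption on $\mathcal{T}_h$, which is standard but should be flagged; in fact for $k=1$ the $H^2$ seminorm of $v_h$ vanishes, and for $k=2,3$ it is the usual polynomial inverse estimate. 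Everything else is a routine assembly of Cauchy--Schwarz, the three cited lemmas, and standard interpolation theory.
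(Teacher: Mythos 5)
Your overall machinery is the right one---coercivity of $b_\tau$, the orthogonality relation \eqref{eq:bvpvfe}, boundedness of $a_\tau$ and $b_\tau$, and Lemma \ref{lem:eb} combined with an inverse inequality to absorb $h_\tau|v_h|_{H^2(\tau)}$---but the detour through a nodal interpolant $\Pi_\tau u$ creates a gap you never close. With your splitting you are left with $|u-\Pi_\tau u|_{H^1(\tau)}$ and $b_\tau(\Pi_\tau u-u,v_h)$, i.e.\ with a leftover of size $Ch_\tau|u|_{H^2(\tau)}$, and the claim that this can be ``absorbed'' into the stated right-hand side is not valid: that right-hand side contains only the error seminorms $|u-u_h|_{H^1(\tau_i)}$ and $h_{\tau_i}|u-u_h|_{H^2(\tau_i)}$, and $h_\tau|u|_{H^2(\tau)}$ is not controlled by these ($|u|_{H^2(\tau)}$ is not $|u-u_h|_{H^2(\tau)}$, and no cancellation is available). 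Moreover, if such a term were allowed to survive, summing over the mesh in Theorem \ref{thm:perr} would contribute $O(h)\,|u|_{H^2(\Omega)}$, which destroys the optimal rate $h^k$ exactly in the cases $k=2,3$ the lemma is meant to cover. So, as written, the argument does not establish the stated estimate.

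The fix is the option you mention only in passing: no interpolant is needed, because $u_h|_\tau$ itself belongs to $V^k_h(\tau)$. Split $u-\widetilde u_{\tau,h}=(u-u_h)+(u_h-\widetilde u_{\tau,h})$, set $\delta_{\tau,h}:=u_h-\widetilde u_{\tau,h}$, and apply coercivity: $C_b|\delta_{\tau,h}|^2_{H^1(\tau)}\le b_\tau(u_h-u,\delta_{\tau,h})+b_\tau(u-\widetilde u_{\tau,h},\delta_{\tau,h})$. Bound the first term by \eqref{eq:bbounded}, rewrite the second via \eqref{eq:bvpvfe} as $a_\tau(u-u_h,\delta_{\tau,h})+e_\tau(u-u_h,I_\tau\delta_{\tau,h}-\delta_{\tau,h})$, estimate these by boundedness of $a_\tau$ and by Lemma \ref{lem:eb} together with the inverse inequality, divide by $|\delta_{\tau,h}|_{H^1(\tau)}$, and conclude with the triangle inequality. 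Every term is then already in $|u-u_h|$ form, no $|u|_{H^2(\tau)}$ appears, and the stated bound follows; this is precisely the paper's proof. (A caveat shared by both arguments: the coercivity \eqref{eq:coerc} in Lemma \ref{lem:loccoe} is only quoted for $k=1,2$, so the $k=3$ case rests on an additional assumption.)
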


\begin{proof}
Triangle inequality gives
\begin{equation} \label{eq:j1}
| u - \widetilde u_{\tau, h} |_{H^1{(\tau)}}  \leq | u - u_h |_{H^1{(\tau)}} + | u_h - \widetilde u_{\tau, h} |_{H^1{(\tau)}}.
\end{equation}

By Lemma \ref{lem:loccoe}, we have
\begin{equation}  \label{eq:j2}
\begin{aligned}
C_b | u_h - \widetilde u_{\tau, h} |^2_{H^1{(\tau)}}&  \leq b_\tau(u_h - \widetilde u_{\tau, h}, u_h - \widetilde u_{\tau, h} ) \\
& = b_\tau(u_h - u, u_h - \widetilde u_{\tau, h} ) + b_\tau(u - \widetilde u_{\tau, h}, u_h - \widetilde u_{\tau, h} ) \\
& \leq C | u - u_h |_{H^1{(\tau)}} | u_h - \widetilde u_{\tau, h}  |_{H^1{(\tau)}} + b_\tau(u - \widetilde u_{\tau, h}, u_h - \widetilde u_{\tau, h} ). \\
\end{aligned}
\end{equation}

For simplicity, we set $\delta_{\tau, h} = u_h - \widetilde u_{\tau, h}$.  By \eqref{eq:bvpvfe}, we have
\begin{equation}\label{eq:j3}
b_\tau(u - \widetilde u_{\tau, h}, \delta_{\tau, h} ) = a_\tau(u - u_h, \delta_{\tau, h} ) + e_\tau(u - u_h, \delta_{\tau, h} )
\end{equation}

Now, by boundedness of the bilinear form of $a_\tau(\cdot, \cdot)$,
\begin{equation}\label{eq:j4}
a_\tau(u - u_h, \delta_{\tau, h}) \leq \kappa_{\tau, \text{max}} | u - u_h |_{H^1{(\tau)}} | \delta_{\tau, h} |_{H^1{(\tau)}},
\end{equation}
where $\kappa_{\tau, \text{max}}$ is the maximum of $\kappa$ on $\tau.$
By Lemma \ref{lem:eb} and inverse inequality, we obtain
\begin{equation}  \label{eq:j5}
\begin{aligned}
 e_\tau(u - u_h, \delta_{\tau, h} ) & \leq C  h_\tau^{1/2} \Big(  |\delta_{\tau, h}|_{H^1(\tau)}  + h_\tau |\delta_{\tau, h}|_{H^2(\tau)} \Big) \sum_{i=0}^3  \Big( h_{\tau_i}^{-1/2} | u - u_h  |_{H^1(\tau_i)} + h_{\tau_i}^{1/2} | u - u_h |_{H^2(\tau_i)}  \Big) \\
 &  \leq  Ch_\tau^{1/2}  | \delta_{\tau, h} |_{H^1(\tau)} \sum_{i=0}^3   \Big( h_{\tau_i}^{-1/2} | u - u_h  |_{H^1(\tau_i)} + h_{\tau_i}^{1/2} | u - u_h  |_{H^2(\tau_i)}  \Big). 
 \end{aligned}
\end{equation}

Combining \eqref{eq:j2}, \eqref{eq:j3}, \eqref{eq:j4}, and \eqref{eq:j5}, dividing both side $ | u_h - \widetilde u_{\tau, h} |_{H^1{(\tau)}}$ and then putting it into \eqref{eq:j1} gives the desired result. 
\end{proof}

\newtheorem{thm}{Theorem}[section]
\begin{thm} \label{thm:perr}
Assume $u$ is the solution of \eqref{pde} and it is sufficiently smooth. Let $\widetilde u_h = \sum_{\tau \in \mathcal{T}_h } \widetilde u_{\tau, h} \chi_\tau$, where $\widetilde u_{\tau, h} \in V^k_h(\tau)$ is the  post-proccessed solution \eqref{eq:ppsol} and $\chi_\tau$ is the usual characteristic function for $\tau$, then we have
$$
| u - \widetilde u_h |_{H^1(\Omega)} \leq Ch^k |u|_{H^{k+1}(\Omega)},
$$
where $C$ is a constant independent of $h$.
\end{thm}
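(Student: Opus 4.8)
The plan is to sum the local error estimate from Lemma~\ref{lem:locerr} over all elements $\tau\in\mathcal{T}_h$ and then invoke the standard a priori convergence estimates for the CGFEM solution $u_h$. Since $\widetilde u_h = \sum_{\tau} \widetilde u_{\tau,h}\chi_\tau$ and the $H^1$ semi-norm over $\Omega$ decomposes elementwise (the post-processed solution need not be globally continuous, but $|\cdot|_{H^1(\Omega)}$ for such a broken function is just $\big(\sum_\tau |\cdot|_{H^1(\tau)}^2\big)^{1/2}$), I would write
\begin{equation*}
|u - \widetilde u_h|_{H^1(\Omega)}^2 = \sum_{\tau\in\mathcal{T}_h} |u - \widetilde u_{\tau,h}|_{H^1(\tau)}^2,
\end{equation*}
and bound each summand via Lemma~\ref{lem:locerr}. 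Using $(a+b)^2 \le 2a^2+2b^2$ and the crude bound $h_{\tau_i}^{-1/2} h_\tau^{1/2} \le C$ under a shape-regularity / quasi-uniformity assumption on $\mathcal{T}_h$ (so that neighboring elements have comparable diameters), the term $h_\tau^{1/2} h_{\tau_i}^{-1/2}|u-u_h|_{H^1(\tau_i)}$ is controlled by $C|u-u_h|_{H^1(\tau_i)}$, and the term $h_\tau^{1/2}h_{\tau_i}^{1/2}|u-u_h|_{H^2(\tau_i)}$ by $Ch_{\tau_i}|u-u_h|_{H^2(\tau_i)} \le Ch|u-u_h|_{H^2(\tau_i)}$.

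Next I would sum. Each element $\tau_i$ appears as a neighbor of only boundedly many elements $\tau$ (at most three, plus itself), so after summing the finite overlap costs only a universal constant:
\begin{equation*}
\sum_{\tau\in\mathcal{T}_h} |u - \widetilde u_{\tau,h}|_{H^1(\tau)}^2 \le C \sum_{\tau\in\mathcal{T}_h} |u-u_h|_{H^1(\tau)}^2 + Ch^2 \sum_{\tau\in\mathcal{T}_h} |u - u_h|_{H^2(\tau)}^2 = C|u-u_h|_{H^1(\Omega)}^2 + Ch^2\, \|u-u_h\|_{\text{broken }H^2}^2.
\end{equation*}
For the first term, the standard a priori estimate for $k^{\text{th}}$ order CGFEM gives $|u-u_h|_{H^1(\Omega)} \le Ch^k|u|_{H^{k+1}(\Omega)}$. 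For the second term I would estimate $|u-u_h|_{H^2(\tau)} \le |u|_{H^2(\tau)} + |u_h|_{H^2(\tau)}$; the first piece is bounded by $|u|_{H^{k+1}(\Omega)}$ (for $k\ge1$), while $|u_h|_{H^2(\tau)}$ needs an inverse inequality, $|u_h|_{H^2(\tau)} \le Ch_\tau^{-1}|u_h|_{H^1(\tau)}$, together with $|u_h|_{H^1(\Omega)} \le |u|_{H^1(\Omega)} + Ch^k|u|_{H^{k+1}(\Omega)} \le C|u|_{H^{k+1}(\Omega)}$. Hence $h\,\|u-u_h\|_{\text{broken }H^2} \le Ch\cdot h^{-1}|u-u_h|_{H^1(\Omega)} + Ch|u|_{H^{k+1}(\Omega)}$; a cleaner route is to bound $h|u_h - \Pi_k u|_{H^2} \le C|u_h - \Pi_k u|_{H^1} \le C(|u-u_h|_{H^1} + |u - \Pi_k u|_{H^1}) \le Ch^k|u|_{H^{k+1}(\Omega)}$ via inverse inequality on the finite element function $u_h - \Pi_k u$, and $h|u - \Pi_k u|_{H^2} \le Ch^k|u|_{H^{k+1}}$ by interpolation. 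Either way the second term also contributes $O(h^k)$.

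I expect the main obstacle to be the bookkeeping around the $H^2$ term: Lemma~\ref{lem:locerr} forces an $|u-u_h|_{H^2(\tau_i)}$ factor into the estimate, and for $k=1$ one cannot afford a full power of $h_\tau$ to absorb it naively — one must be careful to pair it with the $h_{\tau_i}^{1/2}$ (not just $h_\tau^{1/2}$) that Lemma~\ref{lem:locerr} provides, and then split $u - u_h = (u - \Pi_k u) + (\Pi_k u - u_h)$ so that the inverse inequality is applied only to the finite element part. This requires quasi-uniformity of the mesh (to compare $h_\tau$ and $h_{\tau_i}$ and to invoke the global inverse inequality), which I would state as a standing assumption. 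Everything else — finite overlap of element neighborhoods, the decomposition of the broken $H^1$ norm, and citing the CGFEM a priori estimate $|u-u_h|_{H^1}\le Ch^k|u|_{H^{k+1}}$ and interpolation theory — is routine.
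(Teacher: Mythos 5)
Your proposal is correct and follows essentially the same route as the paper: sum the local estimate of Lemma~\ref{lem:locerr} over the elements (absorbing the neighbor terms via shape regularity and finite overlap) and then invoke the CGFEM a priori bounds. The only difference is that the paper simply cites $| u - u_h |_{H^2(\Omega)} \leq C h^{k-1} |u|_{H^{k+1}(\Omega)}$ from the standard theory, whereas you re-derive this (broken) $H^2$ bound by splitting $u - u_h$ through the interpolant and applying an inverse inequality, which is a more self-contained but equivalent treatment of the same step.
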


\begin{proof}
Noticing that $h_\tau \leq h$, this can be proved by using Lemma \ref{lem:locerr} and arithmetic-geometric mean inequality:
\begin{equation*}
\begin{aligned}
| u - \widetilde u_h |^2_{H^1(\Omega)} & = \sum_{\tau} | u - \widetilde u_h |^2_{H^1(\tau)}  \\
& \leq
C \sum_{\tau} \Big( | u - u_h |^2_{H^1{(\tau)}} + h^2 | u - u_h |^2_{H^2{(\tau)}} \Big) \\
& \leq C | u - u_h |^2_{H^1(\Omega)} + C h^2 | u - u_h |^2_{H^2(\Omega)} .
\end{aligned}
\end{equation*}
By the property of CGFEM approximation (see Theorem 14.3.3 in \cite{brenner2008mathematical} for example), we have
\begin{equation}
| u - u_h |_{H^1(\Omega)} \leq C h^k |u|_{H^{k+1}(\Omega)}, \qquad | u - u_h |_{H^2(\Omega)} \leq C h^{k-1} |u|_{H^{k+1}(\Omega)}.
\end{equation}
Substituting these inequalities back gives the desired result.

\end{proof}

\section{Numerical Experiments}\label{sec:num}

In this section we present various numerical examples to illustrate the performance of the proposed post-processing technique for  CGFEM using $V_h^k$, $k=1,2,3$. For the numerical examples, we consider mainly the local conservation property of the post-processed fluxes and the convergence behavior of the post-processed solutions. For these purposes, we consider the following test problems in the unit domain $[0, 1]\times[0, 1]$.

\textbf{Example 1.} Elliptic equation \eqref{pde} with $\kappa =1, u = (x-x^2) (y-y^2)$ with fully Dirichlet boundary condition $g = 0$. $f$ is the function derived from \eqref{pde}.

\textbf{Example 2.} Elliptic equation \eqref{pde} with $\kappa =e^{2x-y^2}, f = -e^{x}, u = e^{-x + y^2}$ with the fully Dirichlet boundary condition $g$ satisfying the true solution.

\textbf{Example 3.} Elliptic equation \eqref{pde} with 
$$
\kappa = \frac{1}{1-0.8\sin(6\pi x)} \cdot \frac{1}{1-0.8\sin(6\pi y)},
$$
$$
u = 1 - \frac{2\cos(6\pi x) + 15\pi x - 2}{15 \pi},
$$
and $f = 0$. We impose boundary conditions as Dirichlet $1$ at the left boundary and $0$ at the right boundary with
homogeneous Neumann boundary conditions at the top and bottom boundaries. 

For each test problem, we will present the numerical results of linear, quadratic, and cubic CGFEMs. Now we start with a study of local conservation property of the post-processed fluxes.

\subsection{Conservation Study} \label{sec:cns}
To numerically illustrate the behavior of the post-processed fluxes, we run the examples by verifying that the post-processed fluxes satisfies the desired local conservation property \eqref{eq:cvconservation}. For this purpose, we define a local conservation error (LCE) as
\begin{equation}
\text{LCE}_z = \int_{\partial C^z}  - \kappa  \nabla \hat u_h \cdot \boldsymbol{n}  \ \text{d} l - \int_{C^z} f \ \text{d} \boldsymbol{x},
\end{equation}
where $\hat u_h = u_h$ for CGFEMs solution and $\hat u_h = \widetilde u_h$ for the post-processed solution. Naturally, $\text{LCE}_z=0$ means local conservation property \eqref{eq:cvconservation} is satisfied while $\text{LCE}_z \ne 0$ means local conservation property \eqref{eq:cvconservation} is not satisfied on the control volume $C^z$. 

Without a post-processing, for instance, LCEs of $u_h$  solved by quadratic CGFEMs are shown by red plots in the left column for Example 1 and right column for Example 2 in Figure \ref{fig:ex12lce}, respectively. We see that these errors are non-zeros, which means that the local conservation property \eqref{eq:cvconservation} is not satisfied.  The control volume indices in the figures are arranged as follows: firstly indices from vertices of the mesh, secondly the indices of the degrees of freedom on edges of elements, and lastly (for the cubic case) indices of degrees of freedom inside the elements. Now with the post-processing, LCEs of $\widetilde u_h$ in the quadratic case are shown by dotted green plots in the left column for Example 1 and right column for Example 2 in Figure \ref{fig:ex12lce}, respectively. These errors are practically negligible, which is mainly attributed to the errors in the application of numerical integration and the machine precision. Theoretically, these errors should be zeros as discussed in Section \ref{sec:loccons}. The LCEs for both $u_h$ and $\widetilde u_h$ for Example 3  are shown in Figure \ref{fig:ex3lce}. 

\begin{figure}[ht]
\centering
\includegraphics[height=3.5cm]{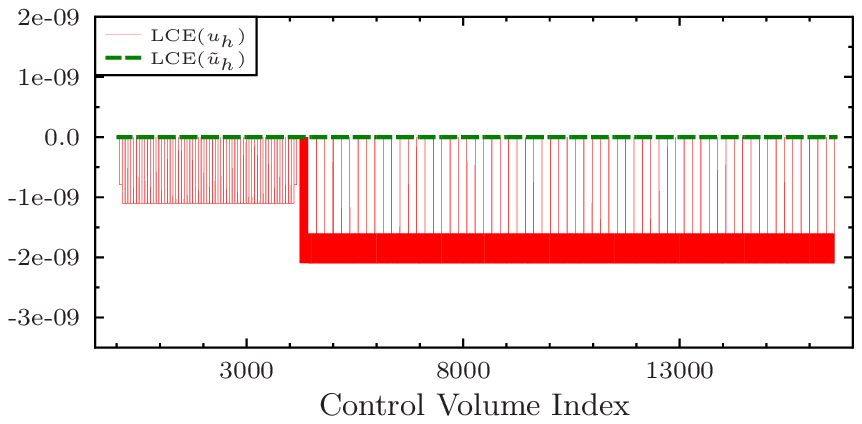}
\includegraphics[height=3.5cm]{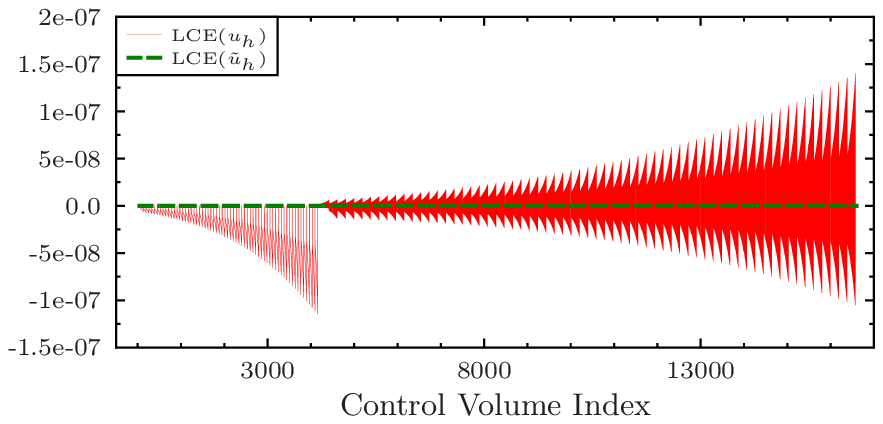}
\caption{ LCEs for $u_h$ (red plots) and for $\widetilde u_h$ (dotted green plots) for Example 1 (left column) and Example 2 (right column) using CGFEM with $V_h^2$.}
\label{fig:ex12lce}
\end{figure}

\begin{figure}[ht]
\centering
\includegraphics[height=4.5cm]{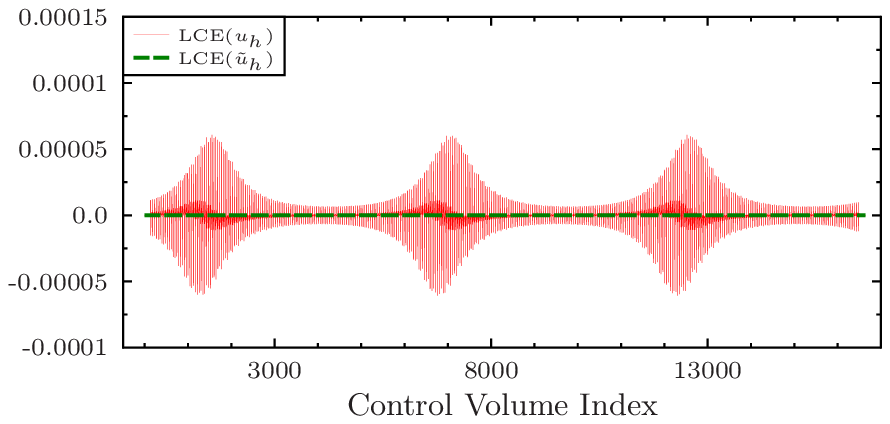} \includegraphics[height=4.5cm]{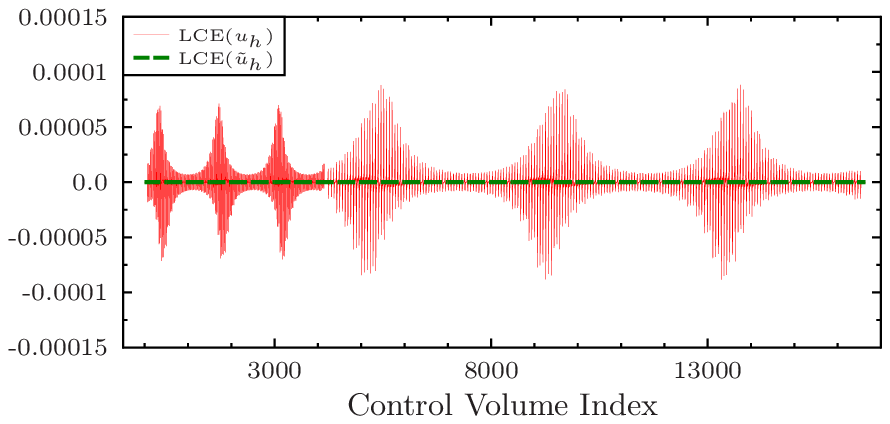}
\includegraphics[height=4.5cm]{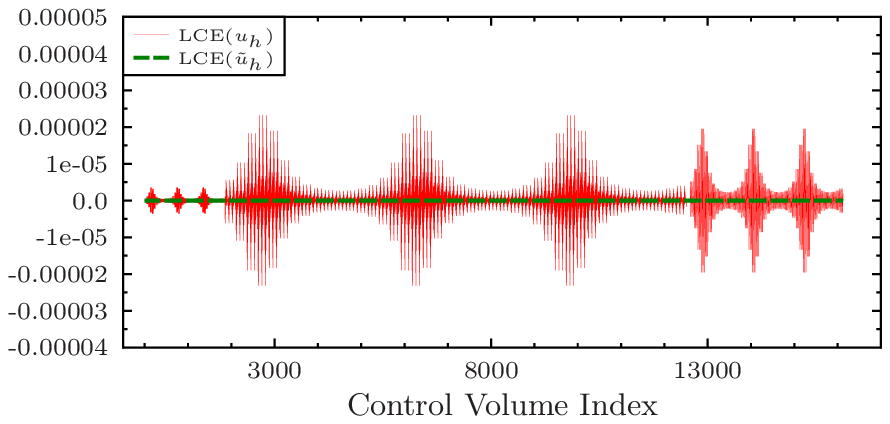}
\caption{ LCEs for $u_h$ (red plots) and for $\widetilde u_h$ (dotted green plots) for Example 3 using $V_h^k$ with 
$k=1$ (top), $k=2$ (middle), and $k=3$ (bottom). }
\label{fig:ex3lce}
\end{figure}

\subsection{Convergence Study} \label{sec:cnv}
Now we show the numerical convergence rates for Example 1, 2, and 3. We collect the $H^1$ semi-norm errors of the CGFEM solution, post-processed solution, and also the difference between these two solutions defined as $| u_h - \widetilde u_h |_{H^1(\Omega)}$. The results for Example 1, 2, and 3 are shown in Figure \ref{fig:ex123h1err}. The $H^1$ semi-norm errors of $k^{\text{th}}$ order CGFEM solutions and post-processed solutions are of optimal convergence orders, which confirm convergence analysis in Theorem \ref{thm:perr} in Section \ref{sec:ana}.

From Figure \ref{fig:ex123h1err},  we can also see that for quadratic and cubic CGFEM, $| u_h - \widetilde u_h |_{H^1(\Omega)}$ tends to be good error estimators. In the linear case, $| u_h - \widetilde u_h |_{H^1(\Omega)}$ is of order 2, which is higher than the optimal error convergence order of CGFEM solution.

\begin{figure}[ht]
\centering
\includegraphics[height=5cm]{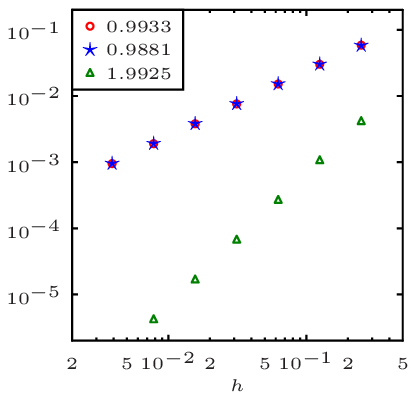}
\includegraphics[height=5cm]{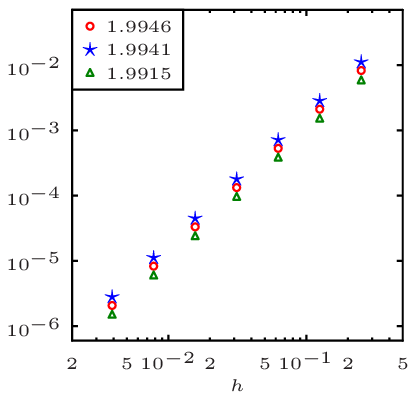}
\includegraphics[height=5cm]{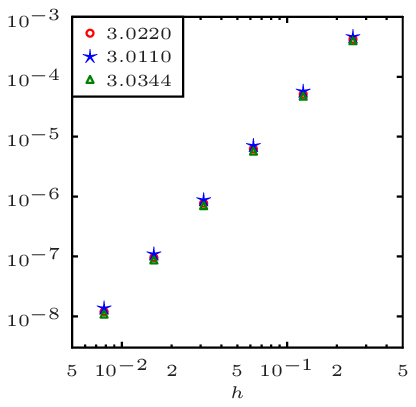}

\includegraphics[height=5cm]{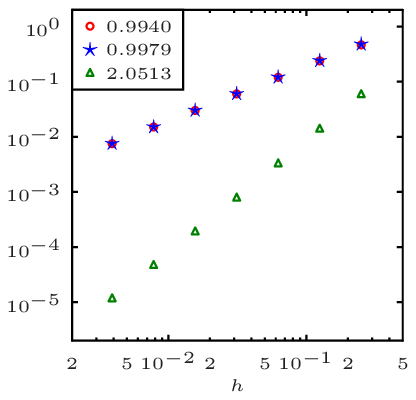}
\includegraphics[height=5cm]{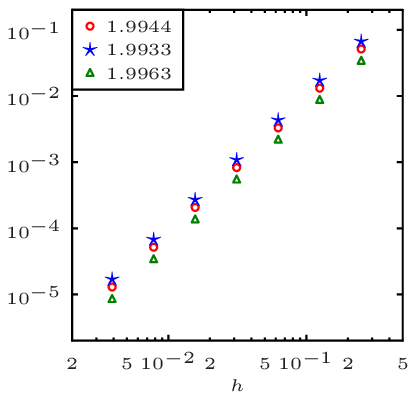}
\includegraphics[height=5cm]{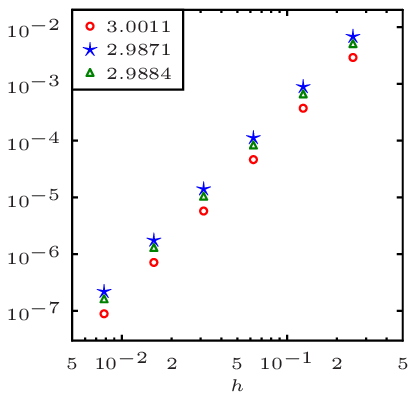}

\includegraphics[height=5cm]{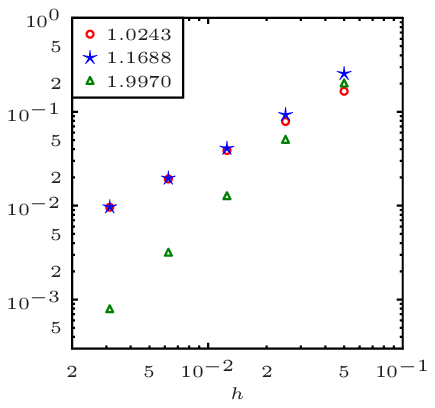}
\includegraphics[height=5cm]{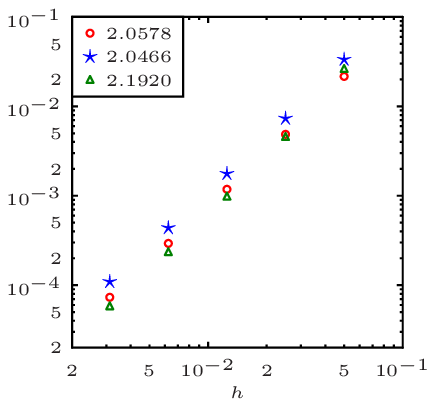}
\includegraphics[height=5cm]{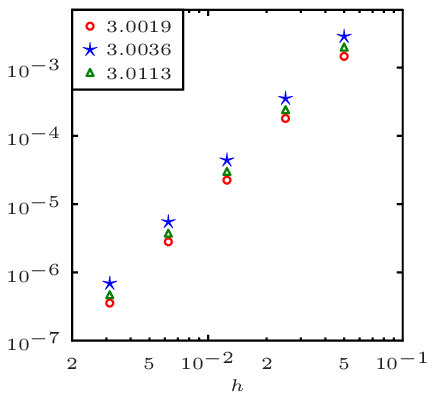}
\caption{ $H^1$ semi-norm errors: \textcolor{red}{$\circ$} is for $|u - u_h|_{H^1(\Omega)}$,
\textcolor{blue}{$\star$} is for  $|u - \widetilde u_h|_{H^1(\Omega)}$, {\smaller \smaller \smaller \textcolor{green}{$\triangle$}} is for $|u_h - \widetilde u_h|_{H^1(\Omega)}$ for Example 1 (top row), 2 (middle row), and 3 (bottom row) using linear (left column), quadratic (middle column), and cubic (right column) CGFEM. }
\label{fig:ex123h1err}
\end{figure}

\section{Conclusion}
In this work, we proposed a post-processing technique for any order CGFEM for elliptic problems. This technique builds a bridge between any order of finite volume element method (FVEM) and any order of CGFEM. FVEM has the advantage of its local conservation property but the disadvantages in analysis especially for higher order FVEMs, while CGFEM has its fully established analysis but it lacks the local conservation property. This technique is naturally proposed in this less than ideal  situation to serve as a great tool if one would like to use CGFEM to solve PDEs while maintaining a local conservation property, for instance in two-phase flow simulations. 

Since the problems that the post-processing technique requires to solve are localized, they are independent of each other. For linear CGFEM, the technique requires solving a 3-by-3 system for each element while for quadratic and cubic CGFEMs, the technique requires solving a 6-by-6 system and 10-by-10 system for each element, respectively.  It thus can be easily, naturally, and efficiently implemented in a parallel computing environment.

As for future work, one can use this technique for other differential equations, such as advection diffusion equations, two-phase flow problems, and elasticity models; also one can apply this technique for other numerical methods, such as SUPG. One interesting direction we would like to work on is to develop a post-processing technique which requires solving only 3-by-3 systems for each element and for any order of CGFEMs. 


\bibliographystyle{siam}
\bibliography{pphofem}

\end{document}